\journal{Journal of \LaTeX\ Templates}
\begin{document}

\begin{frontmatter}

\title{QUASI-ALGEBRAS, A SPECIAL SAMPLE OF QUASILINEAR SPACES}


\author[mymainaddress
]{Reza Dehghanizade}
\ead{r.dehghanizade@stu.yazd.ac.ir}

\author[mysecondaryaddress]{Seyed Mohamad Sadegh Modarres Mosadegh\corref{mycorrespondingauthor}}
\cortext[mycorrespondingauthor]{Corresponding author}
\ead{smodarres@Yazd.ac.ir}

\address[mymainaddress]{Department of Mathematics, Yazd University, Yazd, Iran}
\address[mysecondaryaddress]{Department of Mathematics, Yazd University, Yazd, Iran}

\begin{abstract}
Similar to linear spaces, many examples of quasilinear spaces have a notion of multiplication of the elements. To characterising these examples, in the present paper we generalize the notion of quasilinear spaces and introduce quasi-algebras and normed quasi-algebras with the help of what was done in constructing algebras. After examining some properties of these new spaces, we introduce the concept of quasi-homomorphisms that is an instance of a quasilinear operator. Moreover since the study of spaces with algebraic structures cannot be completed without the study of spectrums, we define the concept of quasi-spectrum on quasi-algebras as we expect it to be.
\end{abstract}

\begin{keyword}
Quasilinear space\sep Quasi-algebra\sep Normed Quasi-algebra\sep Banach Quasi-algebra 
\MSC[2010] 46T99\sep 13J30\sep 26E25\sep 47H04
\end{keyword}

\end{frontmatter}


\section{Introduction}

\newtheorem{theorem}{Theorem}[section]
\newtheorem{lemma}[theorem]{Lemma}
\newtheorem{proposition}[theorem]{Proposition}
\newtheorem{corollary}[theorem]{Corollary}
\newtheorem{example}[theorem]{Example}
\newtheorem{question}[theorem]{Question}
\theoremstyle{definition}
\newtheorem{definition}[theorem]{Definition}
\newtheorem{problem}[theorem]{Problem}

\theoremstyle{remark}
\newtheorem{remark}[theorem]{Remark}
\numberwithin{equation}{section}

Today, linear spaces are widely used throughout mathematics. But in practice, we are always dealing with sets of vectors instead of individual vectors. So this forces us to study more about the space of subsets. In \cite{1}, Aseev presents an abstract approach to the study of space of subsets and multivalued mappings. He introduced the concept of quasilinear spaces, and used a partial order relation to extend linear spaces and generalize some fundamental results of linear algebras. After his work, some authors have motivated to extended some new results on set-valued analysis, set-valued differential equations, fuzzy quasilinear spaces, etc. For more details, see \cite{4,5,6}.\\
Aseev's work allows us to construct new spaces called quasi-algebras. In 2010, Talo and Basar introduced the concept of quasi-algebras, for the first time \cite{7}. We perfect this definition and prove some theorems and results related to these new spaces which provide us in improving some concepts in algebra and quasilinear analysis.\\
We note that many of the definitions and results in this article, can be generalized to all nonlinear and ordered spaces. In the remainder of this paper, I will pose explicit questions and problems where possible. The reader who really wants to work in this area should ask herself these questions in similar spaces.

\section{\bf QUASI-ALGEBRAS} 
 Before starting, let us give one of the most basic, simple and useful examples of quasi-algebras. It is the set $\Omega(\mathbb{R})$ of all bounded closed subsets of real numbers. The algebraic sum operation, scalar multiplication and product on it are defined pointwise and it will be assumed that the partial order on it is given by inclusion. This example can give us a general view of these spaces.
 
 \begin{question}
 What should the definition be?
 \end{question}
The definition should be such as to cover a wide range of examples as possible. moreover, by this definition, a quasi-algebra would be an example of a quasilinear space. For this purpose, as well as combining partial relation and multiplication in the best way, we propose the following definition. What you will see in it is its logical form.
\begin{definition}
A set $X$ is called a quasi-algebra if a partial order relation ``$\leq$'', an algebraic sum operation ``$+$'', an operation of multiplication by real numbers ``$\cdot$'', and a multiplication or product ``$\ast$'' are defined on it in such a way that the following conditions hold for any elements $x,y,z,v\in X$ and any $\alpha,\beta\in \mathbb{R}$:
\begin{enumerate}

  \item $x+y=y+x$;
  \item $x+(y+z)=(x+y)+z$;
  \item  There exists an element $0\in X$, called zero of X, such that $x+0=x$;
  \item $\alpha \cdot (\beta \cdot x)=(\alpha \beta)\cdot x$;
  \item $\alpha \cdot (x+y)=\alpha \cdot x+\alpha \cdot y$;
  \item $1\cdot x=x$;
  \item $0\cdot x=0$;
  \item $(\alpha +\beta)\cdot x\leq\alpha\cdot x+\beta\cdot x$;
  \item $x+z\leq y+v$ if $x\leq y$ and $z\leq v$;
  \item $\alpha\cdot x\leq\alpha\cdot y$ if $x\leq y$;
  \item $x\ast(y\ast z)=(x\ast y)\ast z$;
  \item $\alpha\cdot (x\ast y)=(\alpha\cdot x)\ast y=x\ast (\alpha\cdot y)$;
  \item $0\ast 0=0$;
  \item $x\ast (y+z)\leq x\ast y+x\ast z$ and $(x+y)\ast z\leq x\ast z+y\ast z$;
  \item $x\ast z\leq y\ast v$ if $x\leq y$ and $z\leq v$.
\end{enumerate}
\end{definition}

 In fact, a quasi-algebra is a quasilinear structure, with an operation
 $$
\ast :(x,y)\mapsto x\ast y,\hspace{0.5 cm} X\times X\longrightarrow X ,
$$
which satisfies axioms 11-15. For convenience, henceforth we write $xy$ instead of $x\ast y$.
\begin{lemma} \cite{1} \label{0isMinimal}
  In a quasi-algebra $X$, zero is minimal, i.e., $x=0$ if $x\leq 0$.
\end{lemma}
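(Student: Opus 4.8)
The plan is to exploit the antisymmetry of the partial order ``$\leq$''. Since the hypothesis already gives $x \leq 0$, it suffices to prove the reverse relation $0 \leq x$ for this particular $x$; antisymmetry then forces $x = 0$. I emphasize at the outset that one should \emph{not} hope to prove $0 \leq x$ for every $x$ (a quasi-algebra has no additive inverses in general, so $x + (-1)\cdot x$ need not equal $0$), and therefore the argument must genuinely use the hypothesis $x \leq 0$ rather than produce a blanket bound.

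First I would record the auxiliary identity $(-1)\cdot 0 = 0$, which follows from axioms 4 and 7: writing $0 = 0\cdot x$ and using $\alpha\cdot(\beta\cdot x) = (\alpha\beta)\cdot x$, one gets $(-1)\cdot 0 = (-1)\cdot(0\cdot x) = \big((-1)\cdot 0\big)\cdot x = 0\cdot x = 0$. Next, from $x \leq 0$ together with axiom 10 applied with $\alpha = -1$, I obtain $(-1)\cdot x \leq (-1)\cdot 0 = 0$; abbreviating $-x := (-1)\cdot x$, this reads $-x \leq 0$. Now axiom 8 with $\alpha = 1$, $\beta = -1$ gives $0 = (1+(-1))\cdot x \leq 1\cdot x + (-1)\cdot x = x + (-x)$, invoking axioms 6 and 7. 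On the other hand, combining $-x \leq 0$ with the reflexive relation $x \leq x$ through axiom 9 yields $(-x) + x \leq 0 + x$, that is $x + (-x) \leq x$ after applying commutativity (axiom 1) and the zero axiom (axiom 3). Chaining the two relations by transitivity produces $0 \leq x + (-x) \leq x$, hence $0 \leq x$, and antisymmetry then closes the argument.

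The main obstacle is conceptual rather than computational: one must resist the temptation to treat $x + (-x)$ as if it were $0$, since additive inverses are unavailable here. The crux is the combined use of the sub-distributive axiom 8 (which supplies $0 \leq x + (-x)$ for free) and the order-compatibility axioms 9 and 10, together with the defining peculiarity of quasilinear structures that multiplication by the negative scalar $-1$ does \emph{not} reverse inequalities. Each individual step is a short verification, so once the correct chain of inequalities is identified the remainder is routine.
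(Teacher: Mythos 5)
Your argument is correct: the chain $0=(1+(-1))\cdot x\leq x+(-x)\leq 0+x=x$ (using axioms 7, 8, then 10 applied to $x\leq 0$ together with 9, 1, 3) gives $0\leq x$, and antisymmetry of the partial order finishes it. The paper gives no proof of its own here, deferring to \cite{1}, and your derivation is the standard one from the axioms, so there is nothing to flag.
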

An element $x'\in X$ is called an additive-inverse of $x\in X$ if $x+x'=0$. for simplicity, from now on, we use the word inverse instead of additive-inverse. If an inverse element exists, then it is unique.
\begin{lemma}\cite{1}
\label{QuasiIsAlgebra}
  Suppose that any element $x$ in the quasi-algebra $X$ has an inverse element $x'\in X$. Then the partial order in $X$ is determined by equality, and consequently, $X$ is an algebra.
\end{lemma}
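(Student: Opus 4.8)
The plan is to prove the statement in two stages: first show that the partial order collapses to equality, i.e. that $x\leq y$ already forces $x=y$, and then read off the algebra axioms from the inequality axioms, which become genuine identities once the order is trivial. The whole argument rests on exploiting the monotonicity axioms against the hypothesis that inverses exist everywhere.

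First I would fix an arbitrary pair with $x\leq y$ and bring in the inverse $y'$ of $y$. Since a partial order is reflexive we have $y'\leq y'$, so axiom 9 (monotonicity of the sum) applied to $x\leq y$ and $y'\leq y'$ gives $x+y'\leq y+y'=0$. By Lemma \ref{0isMinimal} zero is minimal, hence $x+y'\leq 0$ forces $x+y'=0$, so $y'$ is an inverse of $x$. Now I would invoke the uniqueness of inverses noted just before the statement: from $x+y'=0$ and $y+y'=0$ both $x$ and $y$ are inverses of $y'$, and uniqueness gives $x=y$. This proves that $\leq$ coincides with equality on $X$.

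With the order reduced to equality, every axiom stated as an inequality becomes an identity. In particular axiom 8 upgrades to $(\alpha+\beta)\cdot x=\alpha\cdot x+\beta\cdot x$, and axiom 14 upgrades to the two-sided distributive law $x\ast(y+z)=x\ast y+x\ast z$ and $(x+y)\ast z=x\ast z+y\ast z$ (axioms 10 and 15 likewise become trivial consequences). Combining axioms 1--3 with the standing assumption that every element has an inverse makes $(X,+)$ an abelian group; adding axioms 4--7 together with the now-equational axiom 8 makes $X$ a vector space over $\mathbb{R}$. Finally the associativity of the product (axiom 11), its compatibility with scalars (axiom 12), and the bilinearity just obtained from axiom 14 exhibit $\ast$ as an associative bilinear multiplication, so $X$ is an algebra.

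The main obstacle is entirely concentrated in the first stage: one must combine monotonicity of the sum, the minimality of zero, and the uniqueness of inverses in exactly the right order to collapse the partial order. Once $\leq$ is equality, the passage to an algebra is mechanical, since each remaining axiom is literally the corresponding algebra axiom with the inequality sign replaced by equality.
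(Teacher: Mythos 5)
Your proof is correct. The paper itself gives no proof of this lemma (it is quoted from Aseev \cite{1} without argument), but the route you take --- adding the inverse $y'$ of $y$ to both sides of $x\leq y$, invoking minimality of zero to get $x+y'=0$, and then uniqueness of inverses to conclude $x=y$, after which the inequality axioms 8, 14 (and trivially 10, 15) collapse to the algebra identities --- is exactly the standard argument for this fact.
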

\begin{corollary}\cite{1}
  In a real algebra, equality is the only way to define a partial order such that conditions 1-15 hold.
\end{corollary}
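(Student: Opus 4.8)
The plan is to read the corollary as two assertions: first, that equality is a legitimate partial order turning a real algebra into a quasi-algebra, and second, that it is the only one. The first assertion is a routine check, while the second is essentially a direct application of Lemma~\ref{QuasiIsAlgebra}.

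First I would verify that equality satisfies conditions 1--15 on any real algebra $A$, so that the statement is not vacuous. Conditions 1--7 and 11--13 are purely algebraic identities holding in every real algebra, independent of any order. For the order-sensitive conditions 8, 9, 10, 14, and 15, taking ``$\leq$'' to be equality converts each inequality into the corresponding equality: condition 8 becomes the distributive law $(\alpha+\beta)x = \alpha x + \beta x$, condition 14 becomes the two distributive laws $x(y+z)=xy+xz$ and $(x+y)z=xz+yz$, and conditions 9, 10, 15 reduce to substitution of equals into $+$, $\cdot$, and $\ast$ respectively. All of these hold in a real algebra, so equality does make $A$ a quasi-algebra.

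For uniqueness, suppose ``$\leq$'' is any partial order on $A$ for which conditions 1--15 hold. Then $A$, equipped with this order, is by definition a quasi-algebra. Since $A$ is a real algebra it is in particular a real vector space, so every element $x\in A$ possesses an additive inverse $-x$ with $x+(-x)=0$; hence every element has an inverse in the sense defined above. The hypothesis of Lemma~\ref{QuasiIsAlgebra} is therefore satisfied, and that lemma forces the partial order ``$\leq$'' to be determined by equality. Thus equality is the unique partial order on $A$ compatible with conditions 1--15.

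The argument presents no genuine obstacle once Lemma~\ref{QuasiIsAlgebra} is in hand; the only point requiring attention is the explicit observation that the vector-space inverse $-x$ of each algebra element is precisely the quasi-algebra inverse demanded by that lemma, which is exactly what licenses its application here.
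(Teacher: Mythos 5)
Your proposal is correct and follows exactly the route the paper intends: the corollary is stated immediately after Lemma~\ref{QuasiIsAlgebra} precisely because every element of a real algebra has an additive inverse, so that lemma forces any admissible partial order to be equality. Your additional check that equality itself satisfies conditions 1--15, and your remark that the vector-space inverse is the quasi-algebra inverse, are the right points to make explicit and introduce no gap.
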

Similar to vector spaces, it will be assumed that $-x=(-1)\cdot x$. However, here $-x$ is not necessarily the inverse of $x$. Moreover, $x-y$ means $x+(-y)$.\\
$x\in X$ is called regular if it has an inverse, otherwise it is called singular. $X_r$ and $X_s$ denote the sets of all regular and singular elements in $X$, respectively. Further, an element $x\in X$ is said to be symmetric provided that $-x=x$, and $X_d$ denotes the set of all such elements \cite{3}. It is easy to conclude that $X_r$ is a real algebra with the same operations of $X$.
\begin{lemma}
  In a quasi-algebra $X$, $x\in X_r$ if and only if $x-x=0$.
\end{lemma}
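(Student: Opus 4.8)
The proof splits into the two implications, and the easy direction fixes the notation used for the hard one. For the ``if'' direction, I would note that $x-x=0$ unfolds to $x+(-x)=0$ with $-x=(-1)\cdot x\in X$; this exhibits an additive inverse of $x$, so $x\in X_r$ by the very definition of a regular element. All the substance lies in the ``only if'' direction, where I am handed a regular $x$ with inverse $x'$ satisfying $x+x'=0$ and must prove that the specific element $-x=(-1)\cdot x$ is itself that inverse, i.e. $x'=-x$.

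The engine of the argument is axiom 8 taken with $\alpha=1,\ \beta=-1$: using axioms 6 and 7 it reads $0=0\cdot x=(1+(-1))\cdot x\le 1\cdot x+(-1)\cdot x=x+(-x)$, so $0\le x-x$ holds for every element. I would then record a cancellation principle for regular elements: if $w$ has inverse $w'$ and $w+a\le w+b$, then adding $w'$ on the left, using monotonicity of addition (axiom 9 with $w'\le w'$) together with axioms 1, 2 and $w'+w=0$, yields $a\le b$. Combining these two facts for $w=x$: from $x+x'=0\le x+(-x)$ I cancel $x$ to obtain $x'\le -x$.

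The key step is to run exactly the same computation with the roles of $x$ and $x'$ interchanged. Since $x'+x=0$, the element $x'$ is itself regular with inverse $x$, so the inequality of the previous paragraph applied to $w=x'$ gives $x\le -x'$. Multiplying this by $-1$ via axiom 10 (assumed for every real scalar, in particular $\alpha=-1$) and simplifying $(-1)\cdot(-x')=x'$ through axioms 4 and 6 produces $-x\le x'$. Now I hold both $x'\le -x$ and $-x\le x'$, so antisymmetry of the partial order forces $x'=-x$; hence $x+(-x)=x+x'=0$, that is $x-x=0$.

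I expect the main obstacle to be the apparent circularity: the natural first move only ever delivers the one-sided inequality $x'\le -x$, and naively repeating it seems to return nothing new. The decisive observation is that $x'$ is also regular, which lets me apply the same inequality symmetrically and then transport it across multiplication by $-1$; antisymmetry finally upgrades the two one-sided inequalities into the desired equality. A secondary point to watch is that axiom 10 is postulated for all real $\alpha$, so scaling by $-1$ preserves rather than reverses ``$\le$''; and one could alternatively close the argument by deducing $x-x\le 0$ from $-x\le x'$ and invoking minimality of zero (Lemma \ref{0isMinimal}) instead of antisymmetry.
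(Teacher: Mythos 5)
Your proof is correct and follows essentially the same route as the paper: both derive $x'\le -x$ and $-x\le x'$ from the universal inequality $0\le w-w$ applied to $x$ and to $x'$ respectively, and conclude $x'=-x$. The only cosmetic difference is that the paper gets the second inequality by adding $-x$ directly to $0\le x'-x'$, while you obtain $x\le -x'$ by symmetry and then scale by $-1$ via axiom 10 — an equivalent step.
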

\begin{proof}
   If $x\in X_r$, there exists $x'\in X_r$ such that $x+x'=0$, since $0\leq x-x$ and $x'\leq x'$ then
$$
 x'\leq x-x+x'=-x.
$$
Also, since $0\leq x'-x'$ and $-x\leq -x$ then
$$
  -x\leq x'-x'-x=x'.
$$
Hence $x'=-x$. The other side is obvious.
\end{proof}
\begin{lemma}\cite{3}
In a quasi-algebra $X$ every regular element is minimal.
\end{lemma}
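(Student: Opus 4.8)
The plan is to exploit the interplay between the additive inverse of a regular element, the monotonicity of addition (axiom 9), and the minimality of zero established earlier (Lemma \ref{0isMinimal}). Recall that an element is minimal precisely when nothing lies strictly below it, i.e.\ $y\leq x$ forces $y=x$. So let $x\in X_r$ with inverse $x'$, meaning $x+x'=0$, and suppose $y\leq x$; the goal is to deduce $y=x$.

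First I would push the hypothesis $y\leq x$ through addition by $x'$. Since trivially $x'\leq x'$, axiom 9 gives
$$
y+x'\leq x+x'=0.
$$
Now the minimality of zero (Lemma \ref{0isMinimal}) applies: any element dominated by $0$ must equal $0$, so $y+x'=0$.

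At this point $y$ and $x$ share the same inverse $x'$, and it remains only to cancel it. Adding $x$ and using associativity and commutativity (axioms 2 and 1) together with $x'+x=0$ and the zero axiom (axiom 3), I obtain
$$
y=y+0=y+(x'+x)=(y+x')+x=0+x=x.
$$
This closes the argument. I do not anticipate a genuine obstacle: the whole proof is a short chain of axiom applications, and the only point requiring a little care is recognizing that the monotonicity axiom 9 is exactly what converts the order hypothesis $y\leq x$ into the inequality $y+x'\leq 0$, after which Lemma \ref{0isMinimal} does the decisive work.
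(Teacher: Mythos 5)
Your proof is correct and complete: the chain $y+x'\leq x+x'=0$, followed by Lemma \ref{0isMinimal} and the standard cancellation $y=y+(x'+x)=(y+x')+x=x$, uses only axioms 1--3 and 9 and is exactly the argument one expects. The paper itself gives no proof here (it only cites \cite{3}), so there is nothing to diverge from; your argument fills that gap in the standard way.
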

A quasi-algebra $X$ is commutative if its product is commutative.
Also, a quasi-algebra $X$ has an identity $1$ or $1_X$, if this element satisfies $1x=x1=x$ for every $x\in X$. It is easy to prove that an identity element is unique whenever it exists. A unital quasi-algebra is a quasi-algebra with an identity.
\begin{example}
  Let $A$ be a real unital algebra. Then $A$ is a unital quasi-algebra with partial order given by equality.
\end{example}

\begin{question}
How to define a norm?
\end{question}
Of course, we will generalize the norm of quasilinear spaces by introducing the notion of quasi-algebra norm in such a way that the multiplication is continuous. Also, we pursue other purposes that you will see in the examples and theorems. Similar to the definition of quasi-algebras, here we see logic in the definition, too.

\begin{definition}
  Let $X$ be a quasi-algebra. A real function $\|\cdot\|_X:X\rightarrow \mathbb{R}$ is called a norm if the following conditions hold:
\begin{enumerate}
  \item $\|x\|_X>0$ if $x\neq 0$;
  \item $\|x+y\|_X\leq\|x\|_X+\|y\|_X$;
  \item $\|\alpha\cdot x\|_X=|\alpha|\|x\|_X$;
  \item $\|xy\|_X\leq \|x\|_X\|y\|_X$;
  \item If $x\leq y$, then $\|x\|_X\leq \|y\|_X$;
  \item If for any $\epsilon>0$ there exists an element  $x_{\epsilon} \in X$ such that $x\leq y+x_{\epsilon}$ and $\|x_{\epsilon}\|_X\leq\epsilon$ then $x\leq y$.
\end{enumerate}
\end{definition}

A normed quasi-algebra is a pair $(X; \|\cdot\|_X)$, where $X$ is a non-zero quasi-algebra and $\|\cdot\|_X$ is a given quasi-algebra norm on it.
If any $x\in X$ is regular, then the concept of a normed quasi-algebra coincides with the concept of a real normed algebra, by lemma \ref{QuasiIsAlgebra}.\\
A unital normed quasi-algebra, is a normed quasi-algebra with an identity element $1$ such that $\|1\|_X=1$.

Let $X$ be a normed quasi-algebra. The Hausdorff metric on $X$ is defined by the equality
$$
  h_X(x,y)=\inf\{r\geq 0:\exists a^{r}_{1},a^{r}_{2}\in X: x\leq y+a^{r}_{1}, y\leq x+a^{r}_{2}, \|a^{r}_{i}\|_X\leq r\}.
$$
Since $x\leq y+(x-y)$ and $y\leq x+(y-x)$, so $h_X (x,y)$ is defined for any elements $x,y\in X$, and $h_X (x,y)\leq\|x-y\|_X$.
\begin{theorem}\label{continuity of operations}
  The operations of algebraic sum, product and multiplication by real numbers are continuous with respect to the Hausdorff metric. Moreover, the norm is a continuous function with respect to the Hausdorff metric.
\end{theorem}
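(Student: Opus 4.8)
The plan is to reduce every continuity statement to a single type of estimate: one that bounds the Hausdorff distance between outputs by the Hausdorff distances between the corresponding inputs. The basic tool is the $\epsilon$-characterisation of the infimum defining $h_X$: for any $x,y\in X$ and any $\epsilon>0$ there exist $a_1,a_2\in X$ with $x\le y+a_1$, $y\le x+a_2$ and $\|a_i\|_X\le h_X(x,y)+\epsilon$. Throughout I would fix convergent sequences $x_n\to x$, $y_n\to y$ (and scalars $\alpha_n\to\alpha$), produce such witnesses, combine them with the order axioms (9, 10, 14, 15) and the scaling axioms (5, 8) to build witnesses for the output, and then let $\epsilon\to0$. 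I would also first record that $h_X$ itself satisfies the triangle inequality, which follows by chaining the inequalities $x\le y+a_1\le z+(b_1+a_1)$ through axiom 9; this lets me split the two-variable operations.

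For the sum I would take witnesses $a_i$ for $(x_n,x)$ and $b_i$ for $(y_n,y)$; axiom 9 gives $x_n+y_n\le(x+y)+(a_1+b_1)$ together with the symmetric inequality, while subadditivity of the norm bounds $\|a_1+b_1\|_X$, yielding $h_X(x_n+y_n,x+y)\le h_X(x_n,x)+h_X(y_n,y)\to0$. For scalar multiplication I would use the triangle inequality to write $h_X(\alpha_n x_n,\alpha x)\le h_X(\alpha_n x_n,\alpha_n x)+h_X(\alpha_n x,\alpha x)$. The first summand is controlled by axioms 10 and 5, giving $h_X(\alpha_n x_n,\alpha_n x)\le|\alpha_n|\,h_X(x_n,x)$, which tends to $0$ since $|\alpha_n|$ is bounded. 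The second summand is the delicate one: writing $\alpha_n=\alpha+\delta_n$, I would apply axiom 8 twice, once as $(\alpha+\delta_n)x\le\alpha x+\delta_n x$ and once as $\alpha x=(\alpha_n-\delta_n)x\le\alpha_n x+(-\delta_n)x$, furnishing both witnesses of norm $|\delta_n|\,\|x\|_X\to0$, so $h_X(\alpha_n x,\alpha x)\le|\delta_n|\,\|x\|_X\to0$.

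For the product I would again split via $h_X(x_ny_n,xy)\le h_X(x_ny_n,x_ny)+h_X(x_ny,xy)$. Fixing the left factor and combining axiom 15 with the distributive inequality 14, a witness $b_1$ for $(y_n,y)$ produces $x_ny_n\le x_ny+x_nb_1$ with $\|x_nb_1\|_X\le\|x_n\|_X\|b_1\|_X$ by norm axiom 4, and symmetrically on the other side, so that $h_X(x_ny_n,x_ny)\le\|x_n\|_X\,h_X(y_n,y)$; likewise $h_X(x_ny,xy)\le\|y\|_X\,h_X(x_n,x)$. Since $x_n\to x$ forces $\|x_n\|_X$ to be bounded (from $x_n\le x+a_1$ and norm axiom 5, $\|x_n\|_X\le\|x\|_X+\|a_1\|_X$), the right-hand side tends to $0$.

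Finally, norm continuity follows from norm axiom 5 alone: witnesses for $(x_n,x)$ give $\|x_n\|_X\le\|x\|_X+\|a_1\|_X$ and $\|x\|_X\le\|x_n\|_X+\|a_2\|_X$, whence $\bigl|\,\|x_n\|_X-\|x\|_X\,\bigr|\le h_X(x_n,x)+\epsilon$ for every $\epsilon>0$, proving the norm is $1$-Lipschitz and in particular continuous. The step I expect to be the main obstacle is the scalar-continuity estimate above: unlike in a linear space, $(\alpha+\beta)x$ need only satisfy $(\alpha+\beta)x\le\alpha x+\beta x$ rather than equality, so one cannot simply write $\alpha_n x-\alpha x=\delta_n x$; the two one-sided applications of axiom 8 are precisely what recover both halves of the Hausdorff estimate. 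Everywhere else the argument is the routine $\epsilon$-bookkeeping of passing the witness norms to the limit.
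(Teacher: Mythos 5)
Your proposal is correct, and for the one part the paper actually proves --- continuity of the product --- it takes a genuinely different route. The paper delegates the sum, scalar multiplication and the norm to Aseev's original work and only writes out the product; there it runs a simultaneous two-variable estimate, choosing witnesses for both sequences with norms at most $\min\{\epsilon/(6\|y\|),\sqrt{\epsilon}/\sqrt{6}\}$, expanding $xy\leq x_ny_n+x_nb_{1,n}^{\epsilon}+a_{1,n}^{\epsilon}y_n+a_{1,n}^{\epsilon}b_{1,n}^{\epsilon}$ via axioms 14--15, and bounding the three error terms at once (the $\sqrt{\epsilon}$ is there precisely to kill the quadratic cross term $a_{1,n}^{\epsilon}b_{1,n}^{\epsilon}$). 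You instead first establish the triangle inequality for $h_X$ and then split $h_X(x_ny_n,xy)\leq h_X(x_ny_n,x_ny)+h_X(x_ny,xy)$, reducing everything to the one-variable Lipschitz bounds $h_X(x_ny_n,x_ny)\leq\|x_n\|_X\,h_X(y_n,y)$ and $h_X(x_ny,xy)\leq\|y\|_X\,h_X(x_n,x)$; the first of these is exactly the theorem the paper proves immediately after this one, so your argument in effect factors the product-continuity through that later result. What your route buys: no $\sqrt{\epsilon}$ bookkeeping, and it avoids the division by $\|y\|_X$ that silently assumes $y\neq 0$ in the paper's choice of bound (both arguments need the boundedness of $\|x_n\|_X$, which the paper obtains the same way you do, via $\|x_n\|_X\leq\|x\|_X+\|a_{2,n}^{\epsilon}\|_X$). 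Your treatment of scalar multiplication --- the two one-sided applications of axiom 8 to handle $\alpha_n x$ versus $\alpha x$ --- addresses the genuinely nontrivial point in the parts the paper omits, and is the standard Aseev argument; your proofs of the sum and norm parts are likewise correct.
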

\begin{proof}
   We prove that the operation of algebraic product is continuous. Proof of other parts is in \cite{1}.
   Suppose that $x_n\rightarrow x$ and $y_n\rightarrow y$. Then for any $\epsilon>0$ there exists an index $N$ such that the following conditions hold for $n\geq N$:
$$
  x\leq x_{n}+a_{1,n}^{\epsilon}, x_{n}\leq x+a_{2,n}^{\epsilon}, \|a_{i,n}^{\epsilon}\|_{X}\leq\min\{\frac{\epsilon}{6\|y\|},\frac{\sqrt{\epsilon}}{\sqrt{6}}\} ,
$$
$$
  y\leq y_{n}+b_{1,n}^{\epsilon}, y_{n}\leq y+b_{2,n}^{\epsilon}, \|b_{i,n}^{\epsilon}\|_{X}\leq\min\{\frac{\epsilon}{6\|x\|},\frac{\sqrt{\epsilon}}{\sqrt{6}}\} .
$$
Consequently,
$$
  xy\leq x_n y_n+x_n b_{1,n}^{\epsilon}+a_{1,n}^{\epsilon} y_n+a_{1,n}^{\epsilon} b_{1,n}^{\epsilon} 
$$
and
$$
  x_n y_n\leq xy+xb_{2,n}^{\epsilon}+a_{2,n}^{\epsilon} y+a_{2,n}^{\epsilon} b_{2,n}^{\epsilon} .
$$
Since
\begin{equation}
\begin{split}
   \| & x_n b_{1,n}^{\epsilon} +a_{1,n}^{\epsilon} y_n+a_{1,n}^{\epsilon} b_{1,n}^{\epsilon}\|_X \\
   & \leq \noindent\|x_n b_{1,n}^{\epsilon}\|_X+\|a_{1,n}^{\epsilon} y_n\|_X+\|a_{1,n}^{\epsilon} b_{1,n}^{\epsilon}\|_X \\
     & \leq \|x_n\|_X \|b_{1,n}^{\epsilon}\|_X+\|a_{1,n}^{\epsilon}\|_X \|y_n\|_X+\|a_{1,n}^{\epsilon}\|_X \|b_{1,n}^{\epsilon}\|_X \\
     & \leq (\|x\|_X+\|a_{2,n}^{\epsilon}\|_X)\|b_{1,n}^{\epsilon}\|_X+\|a_{1,n}^{\epsilon}\|_X (\|y\|_X+\|b_{2,n}^{\epsilon}\|_X)+\|a_{1,n}^{\epsilon}\|_X \|b_{1,n}^{\epsilon}\|_X\\
     & \leq\epsilon ,
\end{split}
\end{equation}
and
\begin{equation}
\begin{split}
   \|xb_{2,n}^{\epsilon} & +a_{2,n}^{\epsilon} y+a_{2,n}^{\epsilon} b_{2,n}^{\epsilon}\|_X \\
   & \leq \|xb_{2,n}^{\epsilon}\|_X+\|a_{2,n}^{\epsilon} y\|_X+\|a_{2,n}^{\epsilon} b_{2,n}^{\epsilon}\|_X \\
     & \leq \|x\|_X\|b_{2,n}^{\epsilon}\|_X+\|a_{2,n}^{\epsilon}\|_X\|y\|_X+\|a_{2,n}^{\epsilon}\| \|b_{2,n}^{\epsilon}\|_X\\& \leq\epsilon ,
\end{split}
\end{equation}
hence $x_ny_n\rightarrow xy$.
\end{proof}
\begin{theorem}
  Let $X$ be a quasi-algebra and $x,y,z\in X$, then
  $$
    h_X(xy,xz)\leq\|x\|_Xh_X(y,z).
  $$
\end{theorem}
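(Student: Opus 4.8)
The plan is to transport, via left multiplication by $x$, a pair of near-optimal witnesses for $h_X(y,z)$ into a pair of admissible witnesses for $h_X(xy,xz)$, and then to control their norms by the sub-multiplicativity of the quasi-algebra norm. First I would unwind the definition of the Hausdorff metric as an infimum: fixing $\epsilon>0$, there exist $a_1,a_2\in X$ with
$$
  y\leq z+a_1,\qquad z\leq y+a_2,\qquad \|a_i\|_X\leq h_X(y,z)+\epsilon.
$$

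Next I would multiply these order relations on the left by $x$. Applying the monotonicity of the product (axiom 15) together with reflexivity $x\leq x$, the relation $y\leq z+a_1$ gives $xy\leq x(z+a_1)$; the left distributive inequality (axiom 14) then yields $x(z+a_1)\leq xz+xa_1$, so by transitivity of the partial order $xy\leq xz+xa_1$. Symmetrically one obtains $xz\leq xy+xa_2$. Hence $xa_1$ and $xa_2$ are legitimate witnesses in the infimum defining $h_X(xy,xz)$, and it only remains to estimate their norms. By the fourth norm axiom (sub-multiplicativity),
$$
  \|xa_i\|_X\leq\|x\|_X\,\|a_i\|_X\leq\|x\|_X\bigl(h_X(y,z)+\epsilon\bigr),
$$
which shows $h_X(xy,xz)\leq\|x\|_X\bigl(h_X(y,z)+\epsilon\bigr)$; letting $\epsilon\to 0$ gives the claim.

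I do not expect a genuine obstacle here, since the argument is essentially a direct substitution into the definitions. The only care-points are handling the infimum correctly through the $\epsilon$-approximation (rather than assuming it is attained) and making sure the chain $xy\leq x(z+a_1)\leq xz+xa_1$ is properly justified by transitivity of ``$\leq$''. One could also remark that the degenerate case $x=0$ is automatically covered: from axiom 12 with $\alpha=0$ together with axiom 7 one gets $0\ast y=0$, so $xy=xz=0$, and the third norm axiom forces $\|0\|_X=0$, whence both sides vanish.
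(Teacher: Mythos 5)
Your proof is correct and follows essentially the same route as the paper: take $\epsilon$-near-optimal witnesses $a_1,a_2$ for $h_X(y,z)$, push them through left multiplication by $x$ using axioms 14 and 15, and bound $\|xa_i\|_X$ by sub-multiplicativity. The only (cosmetic) difference is the bookkeeping of $\epsilon$ --- the paper chooses witnesses within $\epsilon/\|x\|_X$ of the infimum, which tacitly assumes $x\neq 0$, whereas your version with $\epsilon\to 0$ plus the explicit remark on the degenerate case $x=0$ is if anything slightly cleaner.
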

\begin{proof}
   For any $\epsilon>0$ there exist  $a_1$ and $a_2$ such that the following conditions hold:
   $$
     y\leq z+a_1, \|a_1 \|_X\leq h_X (y,z)+\frac{\epsilon}{\|x\|_X} ,
   $$
   $$
     z\leq y+a_2, \|a_2 \|_X\leq h_X (y,z)+\frac{\epsilon}{\|x\|_X} .
   $$
Because $x\leq x$, so
\begin{equation}
\begin{split}
  xy\leq xz+xa_1, \|xa_1\|_X\leq\|x\|_X\|a_1\|_X & \leq\|x\|_X(h_X(y,z)+\frac{\epsilon}{\|x\|_X})\\
  & \leq\|x\|_X h_X(y,z)+\epsilon ,
 \end{split}
\end{equation}
\begin{equation}
\begin{split}
  xz\leq xy+xa_2, \|xa_2\|_X\leq\|x\|_X\|a_2\|_X & \leq\|x\|_X(h_X(y,z)+\frac{\epsilon}{\|x\|_X})\\
  & \leq\|x\|_X h_X(y,z)+\epsilon .
\end{split}  
\end{equation}
The theorem is proved.
\end{proof}
\begin{theorem}
  Let $X$ be a quasi-algebra and $x,y\in X$. If $y\in X_r$ then
  $$
    h_X(x,y)=\|x-y\|_X.
  $$
\end{theorem}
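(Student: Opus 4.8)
The plan is to prove the reverse inequality $\|x-y\|_X \leq h_X(x,y)$, since the companion inequality $h_X(x,y)\leq\|x-y\|_X$ has already been recorded just after the definition of the Hausdorff metric. Putting the two together gives the claimed equality, so the whole content of the theorem lies in this one remaining direction.

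First I would unwind the infimum defining $h_X(x,y)$. Observe that if a radius $r$ admits witnesses $a_1,a_2$, then any larger radius admits the same witnesses; hence the set of admissible $r$ is upward closed. Consequently, for an arbitrary $\epsilon>0$ one may choose $a_1,a_2\in X$ with $x\leq y+a_1$, $y\leq x+a_2$ and $\|a_i\|_X\leq h_X(x,y)+\epsilon$.

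The crucial step, and the only place where the hypothesis $y\in X_r$ is used, is to cancel $y$ from the relation $x\leq y+a_1$. Since $y$ is regular, the characterisation of regular elements gives $y+(-y)=0$. Applying monotonicity of the algebraic sum (axiom 9) to the pair of relations $x\leq y+a_1$ and $-y\leq -y$ yields $x+(-y)\leq (y+a_1)+(-y)$, and by commutativity and associativity the right-hand side collapses to $(y+(-y))+a_1=0+a_1=a_1$. Thus $x-y\leq a_1$.

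Finally, monotonicity of the norm (property 5 of the quasi-algebra norm) converts this order relation into the numerical bound $\|x-y\|_X\leq\|a_1\|_X\leq h_X(x,y)+\epsilon$, and letting $\epsilon\to 0$ gives $\|x-y\|_X\leq h_X(x,y)$. The main obstacle is precisely the cancellation in the third paragraph: without regularity, $y+(-y)$ need not equal $0$, so the right-hand side would not simplify to $a_1$ and the estimate would fail. This is exactly why the statement restricts to $y\in X_r$, and why only the condition $x\leq y+a_1$ (rather than the symmetric pair) is needed in the argument.
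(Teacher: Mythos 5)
Your proposal is correct and follows essentially the same route as the paper's own proof: choose witnesses $a_1,a_2$ with norms at most $h_X(x,y)+\epsilon$, use regularity of $y$ to cancel it and obtain $x-y\leq a_1$, then apply monotonicity of the norm and let $\epsilon\to 0$. You merely spell out the cancellation step (via $y+(-y)=0$ and axiom 9) that the paper leaves implicit.
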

\begin{proof}
 For any $\epsilon>0$ there exist $a_1^{\epsilon}$ and $a_2^{\epsilon}$ such that
  $$
    x\leq y+a_1^{\epsilon}, y\leq x+a_2^{\epsilon}, \|a_i^{\epsilon}\|_X\leq h_X(x,y)+\epsilon .
  $$
  Because $y\in X_r$, so
  $$
    x-y\leq a_1^{\epsilon} .
  $$
  Thus $\|x-y\|_X\leq\|a_1^{\epsilon}\|_X\leq h_X(x,y)+\epsilon$. Hence $h_X(x,y)=\|x-y\|_X$.
\end{proof}
Only quasilinear properties of space are used to prove this theorem. So this holds for all quasilinear spaces. In fact, to equalize norm and meter, it is enough to one of the variable be regular. The following theorem is correct for all quasilinear spaces, too.
 \begin{theorem}
 Let $X$ be a quasi-algebra, $x\in X$ and $\alpha\neq 0,\pm1$ be a real number. If $x=\alpha\cdot x$ then $x=0$.
 \end{theorem}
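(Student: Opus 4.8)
The plan is to deduce the result from the absolute homogeneity of the quasi-algebra norm, working in the normed setting that underlies this section. A purely order-theoretic attack is tempting: writing $\alpha=1+s$ and using the subdistributivity axiom yields relations such as $x=\alpha\cdot x\leq x+s\cdot x$, which one would hope to combine with the minimality of $0$ (Lemma~\ref{0isMinimal}) to force $x\leq 0$. The obstacle, and the reason this route stalls, is that quasilinear spaces admit no additive cancellation, so $x\leq x+s\cdot x$ cannot be simplified to $0\leq s\cdot x$. In fact the order axioms alone cannot force the conclusion --- they are consistent with nonzero, unbounded set-like elements satisfying $x=2\cdot x$ --- which tells us precisely that the norm must be brought in. Once it is, its homogeneity settles the whole statement in one step.

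Concretely, apply $\|\cdot\|_X$ to both sides of the hypothesis $x=\alpha\cdot x$. By the homogeneity property $\|\alpha\cdot x\|_X=|\alpha|\,\|x\|_X$,
\[
\|x\|_X=\|\alpha\cdot x\|_X=|\alpha|\,\|x\|_X ,
\]
and since $\|x\|_X$ is a finite real number this rearranges to $(1-|\alpha|)\,\|x\|_X=0$.

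Now the hypothesis $\alpha\neq\pm 1$ is exactly what guarantees $|\alpha|\neq 1$, so the factor $1-|\alpha|$ is nonzero and hence $\|x\|_X=0$. By the positivity of the norm ($\|x\|_X>0$ whenever $x\neq 0$) this forces $x=0$, as required. I expect no further difficulty: the product $\ast$ is never used, so the identical computation proves the statement in any normed quasilinear space, which is the sense in which it extends beyond quasi-algebras; moreover the excluded value $\alpha=0$ is harmless anyway, since $0\cdot x=0$ by the seventh quasi-algebra axiom, so the genuinely necessary restriction is only $\alpha\neq\pm 1$.
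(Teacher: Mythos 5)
Your proof is correct but follows a genuinely different route from the paper's. The paper iterates the hypothesis: after reducing to the case $|\alpha|>1$, it derives $x=\frac{1}{\alpha^{n}}\cdot x$ for every $n$, and then appeals to the continuity of multiplication by real numbers with respect to the Hausdorff metric (its Theorem on continuity of the operations) to conclude that the constant sequence $x$ also converges to $0\cdot x=0$, whence $x=0$ by uniqueness of limits. You replace that limiting argument with a single application of the homogeneity axiom $\|\alpha\cdot x\|_X=|\alpha|\,\|x\|_X$ together with positivity of the norm, which is shorter and avoids the iteration entirely. Note that both arguments silently strengthen the hypothesis from ``quasi-algebra'' to ``normed quasi-algebra'': the paper needs the Hausdorff metric, you need the norm, and your observation that the bare order axioms cannot suffice (unbounded set-like elements such as $[0,\infty)$ in the space of all closed subsets of $\mathbb{R}$ satisfy $x=2\cdot x$) correctly explains why some such assumption is unavoidable. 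What the paper's continuity argument buys is that it would survive in any metrizable quasilinear structure in which scalar multiplication is continuous, not only a normed one; what yours buys is brevity and an explicit identification of exactly which axioms are used. Your side remarks on the hypotheses ($\alpha=0$ is harmless because $0\cdot x=0$ by axiom 7; $\alpha\neq\pm1$ is the essential restriction) are also correct.
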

\begin{proof}
 We may suppose that $\vert\alpha\vert > 1$, otherwise we replace $\dfrac{1}{\alpha}$ with $\alpha$. Since $x=\alpha\cdot x$ then $x=\dfrac{1}{\alpha}\cdot x$, thus $\alpha x=\dfrac{1}{\alpha}\cdot x$ and so $x=\dfrac{1}{\alpha^2}\cdot  x$. As this process continues, the following result will be obtained:
$$
x=\dfrac{1}{\alpha^2}\cdot x=\dfrac{1}{\alpha^3}\cdot x=\dfrac{1}{\alpha^4}\cdot x=\cdots .
$$
So $\dfrac{1}{\alpha^n}\cdot x \rightarrow x$ and therefore $x=0$ by theorem \ref{continuity of operations}.
\end{proof}

\begin{example}
  Let $A$ be a real Banach algebra. Then $A$ is a complete normed quasi-algebra. Here the partial order is given by equality.
Conversely, if $A$ is a complete normed quasi-algebra and any $x\in A$ is regular, then $A$ is a real Banach algebra, and the partial order on $A$ is equality. In this case $h_X (x,y)=\|x-y\|_X$.
\end{example}
A complete normed quasi-algebra is called a Banach quasi-algebra. 
\begin{example}
  Let $A$ be a real normed algebra. We show the space of nonempty closed bounded subsets of $A$ with   $\Omega(A)$. The algebraic sum operation and product on $\Omega(A)$ are defined as follows:
$$
  B+C=\overline{\{b+c:b\in B,c\in C\}} ,
$$
$$
  BC=\overline{\{bc:b\in B,c\in C\}}.
$$
Multiplication by real number $\alpha\in\mathbb{R}$ is defined by $\alpha\cdot B=\{\alpha b:b\in B\}$. Also, the partial order on $\Omega(A)$ is given by inclusion.
Then $\Omega(A)$ is a normed quasi-algera, and
$$
  \|B\|_{\Omega}=\sup_{b\in B}\|b\|_A .
$$
  We prove that $\|BC\|_X\leq\|B\|_X\|C\|_X$ for any $B$ and $C$ in $\Omega{(A)}$.
  \begin{equation}
    \begin{split}
       \|BC\|_{\Omega}=\sup_{t\in \overline{\{bc:b\in B,c\in C\}}}\|t\|_A & =\sup_{bc\in \{bc:b\in B,c\in C\}}\|bc\|_A \\
         & \leq\sup_{bc\in \{bc:b\in B,c\in C\}}\|b\|_A\|c\|_A \\
         & \leq\sup_{b\in B}\|b\|_A\sup_{c\in C}\|c\|_A \\
         & =\|B\|_X\|C\|_X .
    \end{split}
  \end{equation}
  Proof of other parts are easy.
Suppose that $S_r(0)$ is the closed ball of radius $r$ about $0\in A$. Then 
$$
  h_{\Omega}(B,C)=\inf{\{r\geq0:B\subseteq C+S_r(0), C\subseteq B+S_r(0)\}} ,
$$
defines the Hausdorff metric on $\Omega{(A)}$.
\end{example}
\begin{example}
  Consider that $S$ is a compact topological space, and $X$ is a normed quasi-algebra with an identity element $1$. We denote by $C(S,X)$ the space of all continuous mappings $f:S\rightarrow X$. We write $f_1\leq f_2$ if and only if $f_1(s)\leq f_2(s)$ for any $s\in S$. Also, the operations of algebraic sum, product and multiplication by real numbers are defined pointwise.
   $\|f\|_C=\max_{s\in S}{\|f(s)\|_X}$ defines a norm on $C(S,X)$. 
 Then it is a normed quasi-algebra. The constant function $1_X$ is the identity of $C(S,X)$, in fact $1_C(s)=1_X$ for any $s\in S$.
\end{example}

\begin{problem}
Now’s the time to define `ideal's. Proper definition of an ideal will open many closed doors for us. But the question is how. The following definitions may be helpful.
\begin{definition}
  Let $X$ be a quasilinear space, $s\in X$ and $S\subseteq X$. $s$ is called a l-member of $S$ and it is denoted by $s{\in}_{\leq } S$ if there exists $r\in S$ such that $s\leq r$.
Likewise, $t$ is called a g-member of $S$ and it is denoted by $t{\in}_{\geq} S$ if there exists $r\in S$ such that $r\leq t$. ``l'' and ``g'' are taken from the words ``less'' and ``greater'', respectively.
\end{definition}
\begin{definition}
  Let $X$ be a quasilinear space and $S,T\subseteq X$. $T$ is called a l-subset of $S$ and it is denoted by $T{\subset}_{\leq } S$ if $t{\in}_{\leq } S$ for any $t\in T$. a g-subset is defined similarly.
\end{definition}
Given that $I$ is an ideal of an algebra $A$ if (i) I is a vector subspace of $A$ and (ii) both $AI\subseteq I$  and $IA\subseteq I$, can we achieve a definition in quasi-algebras by replacing $\subseteq$ with ${\subset}_{\leq }$? Note that a vector subspace is defined in quasilinear spaces similar to linear spaces \cite{3}.

\end{problem}

\section{\bf BUILD A CHAIN OF SINGULAR ELEMENTS}
In an algebra, an element minus itself is equal to zero. But that doesn't always happen in any quasi-algebra. By definition of these spaces, zero becomes smaller than subtracting an element from itself. However, elements minus themselves can play a greater role in identifying and analyzing quasi-algebras. In this section, we try to make an infinite chain of elements.\\
Let’s start with a simple example. consider the space of all nonempty convex compact subsets of $\mathbb{R}$ denoted by $\Omega_c (\mathbb{R})$. For every $a,b\in\mathbb{R}$, the following statement is true:
$$
[a,b]\subseteq [a,b]+([a,b]-[a,b])
$$
This method help us to build a chain of unequal elements, and we will do it here. Note that all results of this section are valid for the entire quasilinear spaces and used to prove an important theorem in the spectrum section.
\begin{lemma}\cite{1} \label{x_n<y_nThenx_0<y_0}
Let  $X$ be a quasi-algebra. Suppose that $x_n\rightarrow x_0$ and $y_n\rightarrow y_0$, and that $x_n\leq y_n$ for any positive integer $n$. Then $x_0\leq y_0$.
\end{lemma}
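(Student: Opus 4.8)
The plan is to deduce $x_0 \leq y_0$ from axiom 6 of the norm, which is precisely the tool designed to pass an order relation to a limit: it suffices to produce, for every $\epsilon > 0$, an element $z_\epsilon \in X$ with $x_0 \leq y_0 + z_\epsilon$ and $\|z_\epsilon\|_X \leq \epsilon$. So the whole argument reduces to manufacturing such a $z_\epsilon$ out of the hypotheses. (Implicit here is that $X$ is a \emph{normed} quasi-algebra, since convergence is taken with respect to the Hausdorff metric, and that metric — along with axiom 6 — is only available once a norm is fixed.)

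First I would fix $\epsilon > 0$ and use the convergences $x_n \rightarrow x_0$ and $y_n \rightarrow y_0$ to choose an index $n$ so large that $h_X(x_n, x_0) < \epsilon/2$ and $h_X(y_n, y_0) < \epsilon/2$. Unwinding the definition of $h_X$ (the infimum being strictly below $\epsilon/2$), I can select elements $a_1, b_2 \in X$ realizing the two approximations I actually need, namely $x_0 \leq x_n + a_1$ and $y_n \leq y_0 + b_2$, with $\|a_1\|_X \leq \epsilon/2$ and $\|b_2\|_X \leq \epsilon/2$. The other two inequalities furnished by $h_X$ will not be used.

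Next I would chain these together with the hypothesis $x_n \leq y_n$. Starting from $x_0 \leq x_n + a_1$, apply axiom 9 to the pair $x_n \leq y_n$ and $a_1 \leq a_1$ to obtain $x_n + a_1 \leq y_n + a_1$, hence $x_0 \leq y_n + a_1$ by transitivity of the order. Applying axiom 9 once more to $y_n \leq y_0 + b_2$ and $a_1 \leq a_1$ gives $y_n + a_1 \leq y_0 + b_2 + a_1$, so that $x_0 \leq y_0 + (a_1 + b_2)$. Setting $z_\epsilon = a_1 + b_2$, the triangle inequality (norm axiom 2) yields $\|z_\epsilon\|_X \leq \|a_1\|_X + \|b_2\|_X \leq \epsilon$.

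Since $\epsilon > 0$ was arbitrary, norm axiom 6 now delivers $x_0 \leq y_0$, completing the argument. I do not anticipate a genuine obstacle: the product operation plays no role, so this is a purely quasilinear fact, consistent with the remark that the results of this section hold for all quasilinear spaces. The only point demanding care is the bookkeeping in the second paragraph, namely picking the correct orientation of the two Hausdorff approximations so that the inequalities compose in the direction $x_0 \leq \cdots \leq y_0 + z_\epsilon$ rather than the reverse.
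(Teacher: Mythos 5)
Your proof is correct. The paper itself gives no argument for this lemma (it is quoted from Aseev's paper \cite{1} without proof), and what you write is exactly the standard argument: extract the two one-sided Hausdorff approximations $x_0\leq x_n+a_1$ and $y_n\leq y_0+b_2$, chain them through $x_n\leq y_n$ using axiom 9 and transitivity, and invoke norm axiom 6. Your parenthetical remark that the hypothesis must implicitly be a \emph{normed} quasi-algebra (or quasilinear space) is also right, since both the Hausdorff metric and axiom 6 presuppose the norm.
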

If $x\leq y$ and $x\neq y$ we represent it with $x\lneqq y$.
\begin{lemma}
 Let $X$ be a quasi-algebra and $x\in X_s$. Then $x\lneqq x+x-x$.
\end{lemma}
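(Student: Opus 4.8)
The plan is to split the statement into the routine inequality $x \le x+x-x$ and the substantive strictness. For the inequality I would first record that $0 \le x-x$: axiom~7 gives $0\cdot x = 0$, and axiom~8 yields $0 = (1+(-1))\cdot x \le 1\cdot x + (-1)\cdot x = x+(-x) = x-x$. Writing $w := x-x$, monotonicity of the sum (axiom~9) applied to $x\le x$ and $0\le w$, together with axiom~3, gives $x = x+0 \le x+w = x+x-x$. This step holds for every $x$, regular or singular.

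The whole difficulty is to upgrade $\le$ to $\lneqq$ under the hypothesis $x\in X_s$, i.e.\ to show $x \ne x+x-x$. I would argue by contradiction: suppose $x = x+(x-x) = x+x+(-x)$. Adding $-x$ to both sides and regrouping by commutativity and associativity, the right-hand side becomes $(x+(-x))+(x+(-x)) = w+w$, while the left-hand side is $x+(-x) = w$; hence $w = w+w$. This self-absorbing identity is the engine of the argument.

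From $w = w+w$ I would extract a contracting scaling chain. Axiom~8 gives $2\cdot w = (1+1)\cdot w \le 1\cdot w + 1\cdot w = w+w = w$, so $2\cdot w \le w$. Multiplying through by $\tfrac12$ and simplifying with axioms~10,~4,~6 turns this into $w \le \tfrac12\cdot w$, and iterating (each time applying monotonicity of scalar multiplication, axiom~10) yields $w \le \tfrac{1}{2^{n}}\cdot w$ for every $n$. Since $\tfrac{1}{2^{n}}\cdot w \to 0$ by continuity of scalar multiplication (Theorem~\ref{continuity of operations}), Lemma~\ref{x_n<y_nThenx_0<y_0} applied to the constant sequence $w$ and to $\tfrac{1}{2^{n}}\cdot w$ gives $w \le 0$. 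Minimality of zero (Lemma~\ref{0isMinimal}) then forces $w = x-x = 0$, which by the earlier characterisation of regular elements means $x$ is regular, contradicting $x\in X_s$. Therefore $x \ne x+x-x$, and combined with the first paragraph we obtain $x \lneqq x+x-x$.

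I expect the main obstacle to be the middle step: recognising that equality collapses $x-x$ into the idempotent-type element satisfying $w = w+w$, and then converting that purely additive identity into the shrinking chain $w \le \tfrac{1}{2^{n}}\cdot w$ so that the convergence/minimality machinery becomes applicable. Everything before and after is routine bookkeeping with the defining axioms.
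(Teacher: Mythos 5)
Your proof is correct and follows essentially the same route as the paper's: assume $x = x+x-x$, deduce $x-x \geq 2\cdot(x-x)$ and hence $x-x \leq \tfrac{1}{2^{n}}\cdot(x-x)$ for all $n$, then invoke continuity of scalar multiplication, the order--limit lemma and minimality of zero to force $x-x=0$, contradicting $x\in X_s$. You merely spell out two steps the paper leaves implicit, namely the non-strict inequality $x\leq x+x-x$ and the ``little effort'' iteration.
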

\begin{proof}
Assume to the contrary that the result is false. Then $x=x+x-x$. Thus $x-x\geq 2\cdot(x-x)$ and so $1/2\cdot(x-x)\geq x-x$. With a little effort, we get the following condition for any $n\in\mathbb{N}$
$$
1/2^n\cdot(x-x)\geq x-x .
$$
Therefore $x-x=0$ by lemma \ref{x_n<y_nThenx_0<y_0}, lemma \ref{0isMinimal} and theorem \ref{continuity of operations}, contrary to the fact that $x\in X_s$.
\end{proof}
\begin{lemma}\label{x+yinX_r->x,yinX_r}\cite{10}
 Let $X$ be a quasi-algebra. For every $x,y\in X$, $x+y\in X_r$ implies $x,y\in X_r$.
\end{lemma}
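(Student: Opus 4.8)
The plan is to reduce everything to the characterization established above, namely that an element $z$ is regular if and only if $z-z=0$. Since $x+y\in X_r$, that characterization immediately gives $(x+y)-(x+y)=0$. The first substantive step is to expand this expression. Using the convention $-(x+y)=(-1)\cdot(x+y)$, together with distributivity of a scalar over a sum (axiom 5) and the commutativity and associativity of the algebraic sum (axioms 1 and 2), I would rewrite
$$
0=(x+y)-(x+y)=(x-x)+(y-y).
$$
Thus the whole problem reduces to showing that each of the two summands $x-x$ and $y-y$ vanishes on its own.

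Next I would record that $0\leq x-x$ and $0\leq y-y$ hold for arbitrary elements. This follows from axiom 8 with $\alpha=1$, $\beta=-1$: indeed $0=0\cdot x=(1+(-1))\cdot x\leq 1\cdot x+(-1)\cdot x=x-x$, where I used axioms 7 and 6 and the convention $-x=(-1)\cdot x$. The identical computation applied to $y$ gives $0\leq y-y$.

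The final step is a squeeze using monotonicity of the sum. Since $x-x\leq x-x$ and $0\leq y-y$, axiom 9 yields $(x-x)+0\leq (x-x)+(y-y)$, which by axiom 3 and the displayed identity reads $x-x\leq 0$. Minimality of zero (Lemma \ref{0isMinimal}) then forces $x-x=0$, so $x\in X_r$ by the characterization. Interchanging the roles of $x$ and $y$ gives $y-y=0$ and hence $y\in X_r$, completing the argument.

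The argument is short, and the only place demanding care is the opening rearrangement. In a quasi-algebra cancellation is unavailable, so the vanishing of $(x+y)-(x+y)$ cannot be read off by cancelling $x+y$ against itself; it must be obtained from the regularity hypothesis through the characterization lemma. The regrouping $(x+y)-(x+y)=(x-x)+(y-y)$ is nonetheless legitimate because it invokes only the exact (equality) axioms 1, 2 and 5, while the inequality axiom 8 plays no role in it. Once the identity $(x-x)+(y-y)=0$ is secured, the remainder is a routine two-line squeeze of $x-x$ between $0$ and $0$, and I expect no genuine obstacle beyond this bookkeeping.
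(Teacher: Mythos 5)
Your proof is correct and complete. Note that the paper itself offers no proof of this lemma --- it is quoted from reference \cite{10} --- so there is nothing internal to compare against; your argument is the standard one and is exactly what one would expect that reference to contain. Every step checks out against the paper's axioms: the characterization $z\in X_r\iff z-z=0$ is the lemma proved earlier in Section~2, the regrouping $(x+y)-(x+y)=(x-x)+(y-y)$ uses only the exact axioms 1, 2 and 5, the inequality $0\le x-x$ follows from axioms 6--8 as you say, and the squeeze via axiom 9 together with Lemma~\ref{0isMinimal} closes the argument. Your closing remark is also well taken: the only real content is that regularity must be funneled through the identity $z-z=0$ rather than through any cancellation, and you have done that correctly.
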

\begin{theorem}\label{ThereExistsSingularElement}
 Let $X$ be a quasi-algebra and $x\in X$. If there exists no $y\in X_s$ such that $x\leq y$, then $X$ is an algebra.
\end{theorem}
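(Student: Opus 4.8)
The plan is to reduce the statement to Lemma \ref{QuasiIsAlgebra}: I will show that the hypothesis forces every element of $X$ to be regular, i.e.\ $X_s=\emptyset$, and then that lemma immediately gives that $X$ is an algebra. So I would argue by contradiction, supposing that some singular element $z\in X_s$ exists, and manufacture from it a singular element lying above $x$. This contradicts the assumption that no $y\in X_s$ satisfies $x\leq y$.

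First I would record the elementary inequality $0\leq z-z$, valid for every $z$: writing $0=0\cdot z=(1+(-1))\cdot z\leq 1\cdot z+(-1)\cdot z=z-z$ uses axioms 7, 8 and 6, and this inequality already underlies the earlier characterization $w\in X_r \iff w-w=0$. Next I would show that $z-z$ is itself singular. Since $z-z=z+(-z)$ and $z\notin X_r$, the contrapositive of Lemma \ref{x+yinX_r->x,yinX_r} (a sum cannot be regular unless both summands are regular) forces $z-z\in X_s$.

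The key construction is then $y:=x+(z-z)$. On one hand, combining $x\leq x$ with $0\leq z-z$ via axiom 9 gives $x=x+0\leq x+(z-z)=y$. On the other hand, since $z-z$ is singular, the same contrapositive of Lemma \ref{x+yinX_r->x,yinX_r} shows that $y=x+(z-z)$ is singular as well. Thus $y\in X_s$ and $x\leq y$, contradicting the hypothesis. Hence no singular element can exist, so every element of $X$ is regular, and Lemma \ref{QuasiIsAlgebra} yields that $X$ is an algebra.

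I expect the only delicate point to be the repeated correct use of Lemma \ref{x+yinX_r->x,yinX_r}: it must be invoked in its contrapositive form (singularity of one summand propagates to the sum) rather than as stated, and one must confirm that $z-z$ is genuinely singular before adjoining $x$ to it. Everything else reduces to the order axiom 9 and the basic inequality $0\leq z-z$.
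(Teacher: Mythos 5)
Your proof is correct and is essentially the contrapositive of the paper's own argument: the paper notes directly that $x\leq x+y-y$ for every $y$, so $x+y-y$ is regular by hypothesis and Lemma \ref{x+yinX_r->x,yinX_r} forces $y\in X_r$, whence Lemma \ref{QuasiIsAlgebra} applies. Your construction $y=x+(z-z)$ and your two invocations of the same lemmas carry exactly the same content, just packaged as a proof by contradiction.
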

You can see the proof of this  theorem in \cite{8}. Here, we prove it easier, in a different way.
\begin{proof}
 Since $x\leq x+y-y$ for any $y\in X$, then $x+y-y\in X_r$ by assumption. So $y\in X_r$ by lemma \ref{x+yinX_r->x,yinX_r}. Thus $X$ is an algebra by lemma \ref{QuasiIsAlgebra}.
\end{proof}
\begin{theorem}\label{chainOfsingular}
Suppose that $X$ is a non-linear quasi-algebra. For every $x\in X$, there exist $x_1,x_2,x_3,\cdots\in X$ such that
$$
 x\lneqq x_1\lneqq x_2\lneqq x_3\lneqq\cdots .
$$
\end{theorem}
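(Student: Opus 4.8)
The plan is to reduce everything to the behaviour of the single operation $y \mapsto y + y - y$ on singular elements, and then to iterate it. First I would use the hypothesis that $X$ is non-linear, so $X$ is not an algebra; by the contrapositive of Theorem \ref{ThereExistsSingularElement} applied to the given $x$, there exists a singular element $s_0 \in X_s$ with $x \leq s_0$. This is the only place the non-linearity is needed: it lets me leave the arbitrary starting point $x$ behind and work entirely inside $X_s$, where I have a growth mechanism available.

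Next I would define the sequence $s_{n+1} = s_n + s_n - s_n = s_n + (s_n - s_n)$ and establish two facts simultaneously by induction. The strict growth $s_n \lneqq s_{n+1}$ is exactly the preceding lemma stating that $y \lneqq y + y - y$ for every $y \in X_s$, so it holds as soon as I know $s_n$ is singular. The preservation of singularity is the heart of the argument: reading $s_{n+1} = s_n + (s_n - s_n)$ as a sum of two elements, if $s_{n+1}$ were regular then Lemma \ref{x+yinX_r->x,yinX_r} would force both summands, in particular $s_n$, to be regular, contradicting the inductive hypothesis. Hence every $s_n$ lies in $X_s$, and the two facts feed each other down the induction, producing $s_0 \lneqq s_1 \lneqq s_2 \lneqq \cdots$.

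Finally I would graft this chain onto $x$. From $x \leq s_0$ and $s_0 \lneqq s_1$ I get $x \leq s_1$; and $x \neq s_1$, since $x = s_1$ together with $s_1 \leq s_0 \leq s_1$ would force $s_0 = s_1$ by antisymmetry, contradicting $s_0 \lneqq s_1$. Thus $x \lneqq s_1$, and setting $x_n = s_n$ for $n \geq 1$ gives $x \lneqq x_1 \lneqq x_2 \lneqq \cdots$, as required.

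The step I expect to be the main obstacle is verifying that singularity is inherited along the sequence; every other link is a one-line appeal to an already-proved lemma, but without Lemma \ref{x+yinX_r->x,yinX_r} the chain could in principle stall at a regular element, where $y - y = 0$ and the growth map collapses to the identity. A small amount of care is also needed at the very first link, where I only know $x \leq s_0$ rather than $x \lneqq s_0$; this is handled cleanly by starting the displayed chain one step in, at $s_1$.
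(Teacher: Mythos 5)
Your proof is correct and follows essentially the same route as the paper: obtain a singular element above $x$ via the contrapositive of Theorem \ref{ThereExistsSingularElement} and iterate $y\mapsto y+y-y$. You are in fact slightly more careful than the paper, whose proof splits into the cases $x\in X_s$ and $x\in X_r$ and leaves implicit the point you rightly identify as essential --- that singularity is preserved along the iteration, which is exactly what Lemma \ref{x+yinX_r->x,yinX_r} guarantees.
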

\begin{proof}
 If $x\in X_s$, then we set $x_1=x+x-x$, $x_2=x_1+x_1-x_1$, $x_3=x_2+x_2-x_2$ and the rest is defined likewise.
 If $x\in X_r$, then there exists at least one $y\in X_s$ such that $x\leq y$ by theorem \ref{ThereExistsSingularElement}. So we set $x_1=y$ and define the rest as in the first case.
\end{proof}

\section{\bf MAPPINGS IN QUASI-ALGEBRAS}

As with studying any mathematical object, we ought to set quasilinear spaces and quasi-algebras in a categorical setting by specifying the morphisms between them.

\subsection{\bf QUASI-HOMOMORPHISMS}
Before we get started, let's take a look at the definition of the quasilinear operator which may be founded in \cite{1}. The following is the first and most logical definition that comes to mind for an operator in quasi algebras.

\begin{definition}
  Let X and Y be quasi-algebras. A mapping $\varphi:X\rightarrow Y$ is a quasi-homomorphism if it satisfies the following conditions:
  \begin{enumerate}
    \item $\varphi(\alpha\cdot x)=\alpha\cdot\varphi(x)$;
    \item $\varphi(x+y)\leq\varphi(x)+\varphi(y)$;
    \item $\varphi(xy)\leq\varphi(x)\varphi(y)$;
    \item if $x\leq y$, then $\varphi(x)\leq\varphi(y)$.  
  \end{enumerate}
  In fact, $\varphi$ is a quasilinear operator which satisfies condition 3.
\end{definition}
Condition 4 is the order preserving property and a map that has this condition is called an order preserving mapping.\\
If X and Y are algebras, then the definition of a quasi-homomorphism coincides with the usual definition of an algebra-homomorphism.
\begin{example}
  Let X be an algebra, and let the function $p:X\rightarrow\mathbb{R}$ be a sublinear such that
  $$
    p(xy)\leq p(x)p(y) .
  $$
   Let $\Omega_c(\mathbb{R})\subseteq\Omega(\mathbb{R})$ be the set of all bounded closed convex subsets of $\mathbb{R}$ ordered by inclusion, see \cite{1}. Then the mapping $\varphi:X\rightarrow\Omega_c(\mathbb{R})$ defined by $\varphi(x)=[-p(-x),p(x)]$ is a quasi-homomorphism from $X$ to $\Omega_c(\mathbb{R})$. Note that the operations on $\Omega_c(\mathbb{R})$ are defined as the operations on $\Omega(\mathbb{R})$.
  We prove that $\varphi(xy)\subseteq\varphi(x)\varphi(y)$.
  \begin{equation}
  \begin{split}
     \varphi(xy) & =[-p(-xy),p(xy)] \\
       & \subseteq[-p(x)p(-y),p(x)p(y)] \\
       & =p(x)[-p(-y),p(y)] \\
       & \subseteq[-p(-x),p(x)][-p(-y),p(y)]=\varphi(x)\varphi(y) .
  \end{split}
  \end{equation}
  Other parts of proof are easy.
\end{example}
\begin{definition}
  Let $X$ and $Y$ be normed quasi-algebras. A quasi-homomorphism $\varphi:X\rightarrow Y$ is said to be bounded if there exists a number $k>0$ such that $\|\varphi(x)\|_Y\leq k\cdot\|x\|_X$ for any $x\in X$.
\end{definition}
\begin{theorem}
  Suppose that $X$ and $Y$ are normed quasi-algebras. Then a quasi-homomorphism $\varphi:X\rightarrow Y$ is bounded if and only if it is continuous at the point $0\in X$. The continuity of $\varphi$ at $0$ implies that it is uniformly continuous on $X$.
\end{theorem}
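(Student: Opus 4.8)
The plan is to establish the cycle of implications: boundedness implies uniform continuity, uniform continuity trivially implies continuity at $0$, and continuity at $0$ implies boundedness. Since uniform continuity is the strongest of the three properties, closing this cycle settles both the stated equivalence and the uniform-continuity claim at once (in particular, continuity at $0$ yields uniform continuity by traversing the cycle through boundedness).

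For the first implication I would actually prove the stronger Lipschitz estimate $h_Y(\varphi(x),\varphi(y))\leq k\,h_X(x,y)$. Fix $x,y\in X$ and $\eta>0$. By the definition of the Hausdorff metric there exist $a_1,a_2\in X$ with $x\leq y+a_1$, $y\leq x+a_2$ and $\|a_i\|_X\leq h_X(x,y)+\eta$. Applying the order-preserving property (condition 4) and then subadditivity (condition 2) of the quasi-homomorphism gives $\varphi(x)\leq\varphi(y+a_1)\leq\varphi(y)+\varphi(a_1)$, and symmetrically $\varphi(y)\leq\varphi(x)+\varphi(a_2)$. The elements $\varphi(a_1),\varphi(a_2)$ serve as witnesses in $Y$, and boundedness yields $\|\varphi(a_i)\|_Y\leq k\|a_i\|_X\leq k(h_X(x,y)+\eta)$. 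Hence $h_Y(\varphi(x),\varphi(y))\leq k(h_X(x,y)+\eta)$, and letting $\eta\to0$ gives the Lipschitz bound, from which uniform continuity follows with $\delta=\epsilon/k$.

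The passage from uniform continuity to continuity at $0$ requires nothing. For the converse I would first record that $\varphi(0)=0$: since $0=0\cdot x$, condition 1 gives $\varphi(0)=0\cdot\varphi(x)=0$. Because $0$ is regular (its own inverse) and $-0=0$, the earlier theorem equating the Hausdorff metric with the norm at a regular point gives $h_X(x,0)=\|x\|_X$ and $h_Y(\varphi(x),0)=\|\varphi(x)\|_Y$, so continuity at $0$ says exactly that $\|x\|_X$ small forces $\|\varphi(x)\|_Y$ small. Taking $\epsilon=1$, choose $\delta>0$ with $\|\varphi(x)\|_Y\leq1$ whenever $\|x\|_X<\delta$. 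For an arbitrary $x\neq0$ set $\tilde x=\tfrac{\delta}{2\|x\|_X}\cdot x$, so that $\|\tilde x\|_X=\delta/2<\delta$; then condition 1 together with norm homogeneity gives $\tfrac{\delta}{2\|x\|_X}\|\varphi(x)\|_Y=\|\varphi(\tilde x)\|_Y\leq1$, whence $\|\varphi(x)\|_Y\leq\tfrac{2}{\delta}\|x\|_X$, so $k=2/\delta$ witnesses boundedness.

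I expect the main obstacle to be the first implication, specifically the careful handling of the Hausdorff metric. Because a quasi-homomorphism only preserves order and is subadditive (conditions 2 and 4 are inequalities, not equalities), one must verify that the one-sided inequalities $x\leq y+a_1$ and $y\leq x+a_2$ defining $h_X$ push forward to inequalities of exactly the same shape in $Y$, with $\varphi(a_1),\varphi(a_2)$ as legitimate witnesses for $h_Y$ and no equality silently assumed. This is the delicate point; once it is checked, the Lipschitz estimate and the scaling argument in the converse are routine, the latter being reduced to a purely norm-theoretic statement by $\varphi(0)=0$ and the regularity of $0$.
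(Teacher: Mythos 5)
Your argument is correct and complete. The paper itself gives no proof of this theorem --- it simply refers the reader to Aseev's paper, where the statement is proved for quasilinear operators --- so there is nothing in the text to compare against line by line; your write-up supplies the missing self-contained argument. Your cycle of implications (bounded $\Rightarrow$ Lipschitz $\Rightarrow$ uniformly continuous $\Rightarrow$ continuous at $0$ $\Rightarrow$ bounded) is the standard route, and the two points you flag as delicate are handled correctly: the witnesses $\varphi(a_1),\varphi(a_2)$ do push forward through conditions 4 and 2 by transitivity of the partial order, and the reduction of continuity at $0$ to a norm statement legitimately uses $\varphi(0)=0$ together with the regularity of $0$ and the theorem $h_X(x,y)=\|x-y\|_X$ for $y\in X_r$. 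It is worth noting explicitly --- and consistent with the paper's remark that a quasi-homomorphism is just a quasilinear operator satisfying the multiplicative condition --- that your proof never invokes condition 3, $\varphi(xy)\leq\varphi(x)\varphi(y)$; the theorem is really a statement about quasilinear operators, which is precisely why the paper can cite Aseev for it.
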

\begin{proof}
  see \cite{1}.
\end{proof}
Let $X$ and $Y$ be normed quasi-algebras. We denote by $\Phi(X,Y)$ the space of all bounded quasi-homomorphisms from $X$ to $Y$. It will be assumed that $\varphi_1\leq\varphi_2$ if and only if $\varphi_1(x)\leq\varphi_2(x)$ for any $x\in X$. Moreover, the operation of multipllication by real numbers, algebraic sum and product are defined by:
$$
(\alpha\cdot\varphi)(x)=\alpha\cdot\varphi(x) ,
$$
$$
  (\varphi_1+\varphi_2)(x)=\varphi_1(x)+\varphi_2(x) ,
$$
$$
  (\varphi_1 \varphi_2 )(x)=\varphi_1(\varphi_2(x)) .
$$
Since, if $\varphi_1\leq\varphi_2$ and $\varphi_3\leq\varphi_4$ then $\varphi_1(\varphi_3(x))\leq\varphi_2(\varphi_3(x))\leq\varphi_2(\varphi_4(x))$ for any $x\in X$, thus
$$
  \varphi_1\varphi_3\leq\varphi_2\varphi_4 .
$$
Then $\Phi(X,Y)$ is a quasi-algebra. Checking other properties is easy.
The norm on $\Phi(X,Y)$ is defined by
$$
  \|\varphi\|_{\Phi}=\sup_{\|x\|_X=1}\|\varphi(x)\|_Y .
$$
Then $\Phi(X,Y)$ is a normed quasi-algebra.

Now may be a  suitable occasion to ask this question:
\begin{question}
Will we be able to generalize the notion of character to quasi-algebras?
\end{question}

\begin{theorem}
  Let $A$ be an unital algebra. Then there is no quasi-homomorphism from $\Omega(A)$ to $\mathbb{R}$.
\end{theorem}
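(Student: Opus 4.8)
The plan is to exploit the fact that the codomain $\mathbb{R}$, being a real unital algebra, admits only the trivial quasi-algebra order. By the corollary following Lemma \ref{QuasiIsAlgebra}, equality is the only partial order on a real algebra compatible with axioms 1--15, so in $\mathbb{R}$ the relation ``$\leq$'' \emph{is} ``$=$''. First I would record the single consequence of this that drives everything: the order-preservation axiom (4) of a quasi-homomorphism collapses into the rigidity statement
$$
B\subseteq C \quad\Longrightarrow\quad \varphi(B)=\varphi(C),
$$
since $\varphi(B)\leq\varphi(C)$ in $\mathbb{R}$ forces $\varphi(B)=\varphi(C)$. Notably, only homogeneity (axiom 1) and this rigidity are needed; subadditivity and multiplicativity play no role.

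Next I would pin down one value. The zero of $\Omega(A)$ is the singleton $\{0\}$, because $0\cdot B=\{0\}$ for every $B\in\Omega(A)$; applying axiom (1) with $\alpha=0$ gives $\varphi(\{0\})=0\cdot\varphi(B)=0$. Then I would propagate this value across the whole space by a one-step sandwich: for an arbitrary $B\in\Omega(A)$ the set $B\cup\{0\}$ is again nonempty, closed and bounded, and it satisfies simultaneously $\{0\}\subseteq B\cup\{0\}$ and $B\subseteq B\cup\{0\}$. Feeding these two inclusions into the rigidity above yields
$$
\varphi(B)=\varphi(B\cup\{0\})=\varphi(\{0\})=0,
$$
so $\varphi$ must be identically zero, contradicting the existence of a genuine (character-type, nonzero) quasi-homomorphism.

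The hard part is conceptual rather than computational: it lies entirely in the opening reduction, namely recognizing that the codomain order being equality converts the innocuous order-preservation axiom into the extremely rigid ``$\varphi$ is constant on every inclusion chain.'' Once that is seen, the argument is forced. The one point requiring care is the final interpretation of the contradiction --- one should make explicit that a quasi-homomorphism here is meant to generalize a character and is therefore required to be nontrivial, since the bookkeeping (that $B\cup\{0\}\in\Omega(A)$ and that $\{0\}$ is the additive identity) is routine and the literal zero map does satisfy the four axioms. A secondary remark worth including is that this rigidity phenomenon depends only on quasilinear structure, so the same proof shows there is no nonzero order-preserving quasilinear functional $\Omega(A)\to\mathbb{R}$ at all.
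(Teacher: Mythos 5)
Your proof is correct, but it takes a genuinely different route from the paper's. The paper works through the identity: since the order on $\mathbb{R}$ is equality, multiplicativity forces $\varphi(\{1\})=\varphi(\{1\})\varphi(\{1\})$, from which it concludes $\varphi(\{1\})=1$, while homogeneity gives $\varphi(\{0\})=0$; the set $B=\{0,1\}$ then contains both singletons, so order-preservation yields $\varphi(B)=1$ and $\varphi(B)=0$ simultaneously. You instead discard the unit and the multiplication axiom entirely and run the sandwich $\{0\}\subseteq B\cup\{0\}\supseteq B$ for arbitrary $B$, concluding $\varphi\equiv 0$. Your version buys more generality: it needs neither $A$ unital nor condition 3 of the definition of quasi-homomorphism, so it actually rules out every nonzero order-preserving homogeneous functional on $\Omega(A)$, and it sidesteps a small imprecision in the paper's own argument, namely that $\varphi(\{1\})=\varphi(\{1\})^2$ only yields $\varphi(\{1\})\in\{0,1\}$, with the case $\varphi(\{1\})=0$ left unhandled there. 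The price, which you correctly flag, is that the conclusion is ``$\varphi$ is the zero map,'' so the theorem must be read as asserting the nonexistence of a \emph{nonzero} quasi-homomorphism --- the literal zero map does satisfy all four axioms --- and some such convention is in fact needed for the statement to be true under either proof.
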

\begin{proof}
  Let $\varphi:\Omega(A)\rightarrow\mathbb{R}$ be a quasi-homomorphism. Since the partial order in $\mathbb{R}$ is determined by equality, $\varphi(\{1\})=\varphi(\{1\})\varphi(\{1\})$, so $\varphi(\{1\})=1$. Moreover $\varphi(\{0\})=\varphi(0\cdot\{1\})=0\cdot\varphi(\{1\})=0$. If $B=\{1,0\}$, applying the quasi-homomorphism $\varphi$ then gives $\varphi(B)=\varphi(\{1\})=1$ and $\varphi(B)=\varphi(\{0\})=0$, a contradiction. Thus a quasi-homomorphism for sets containing $0$ and $1$ can not be defined.
\end{proof}
This is our motive for the following definition.
\begin{definition}
  Let $X$ be a quasi-algebra. Then a quasi-character on X is a non-zero quasi-homomorphism $\varphi:X\rightarrow\Omega(\mathbb{R})$.
\end{definition}
\begin{example}
  Let $m,k\in\mathbb{N}$, and $2\leq k$, then the mapping $\varphi:\mathbb{R}\rightarrow\Omega(\mathbb{R})$ defined by
  $$
    \varphi(x)=\{0\}\cup\{\frac{x}{m^n}:n\in\mathbb{N}\}\cup\{xm^n:1\leq n\leq k\}
  $$
  is a continuous quasi-character. We check the quasi-homomorphism conditions. It is clear that $\varphi(ax)=a\varphi(x)$ and $\varphi(x+y)\subseteq\varphi(x)+\varphi(y)$ For any $a,x,y\in\mathbb{R}$. Since
  $$
    \varphi(xy)=\{0\}\cup\{\frac{xy}{m^n}:n\in\mathbb{N}\}\cup\{xym^n:1\leq n\leq k\} ,
  $$
  and $0=0\times0$, $\frac{xy}{m^{n}}=\frac{x}{m^{n+1}}\times ym$, $xym=\frac{x}{m}\times ym^{2}$ and ${xym^{n}}_{n>1}=xm\times ym^{n-1}$, hence $\varphi(xy)\subseteq\varphi(x)\varphi(y)$. Proof of continuity is simple.
\end{example}
This example shows that it is not necessarily required $\varphi(1)=1$ or $1\leq\varphi(1)$.
\begin{example}\label{ex 37}
  Let $A$ be a real algebra and $n\in\mathbb{N}$. For $i=1,\ldots,n$, let $\varphi_i:A\rightarrow\mathbb{R}$ be a non-zero algebra-homomorphism, then
  $$
    \varphi(x)=\{\varphi_1(x),\varphi_2(x),\ldots,\varphi_n(x)\}
  $$
  is a quasi-character on $A$.
\end{example}
\begin{lemma}\label{(x_i)->{x_i}continuous}
  The mapping
 $$
    (x_1,x_2,\ldots ,x_n)\mapsto\{x_1,x_2,\ldots ,x_n\},\hspace{0.5 cm}{\mathbb{R}}^n\rightarrow\Omega(\mathbb{R})
  $$
  is continuous.
\end{lemma}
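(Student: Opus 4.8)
The plan is to show the map is Lipschitz from $\mathbb{R}^n$ (with its usual metric) to $(\Omega(\mathbb{R}),h_{\Omega})$, which immediately gives continuity. First I would recall that in $\Omega(\mathbb{R})$ the closed ball $S_r(0)$ is just the interval $[-r,r]$, so the Hausdorff metric reads
$$
  h_{\Omega}(B,C)=\inf\{r\geq 0:B\subseteq C+[-r,r],\ C\subseteq B+[-r,r]\}.
$$
Note also that for a finite set $B$ and an interval the algebraic sum $B+[-r,r]$ is already closed, so the closure in the definition of ``$+$'' on $\Omega(\mathbb{R})$ plays no role here.

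The key step is the estimate
$$
  h_{\Omega}(\{x_1,\ldots,x_n\},\{y_1,\ldots,y_n\})\leq\max_{1\leq i\leq n}|x_i-y_i|.
$$
To prove it, set $r=\max_{i}|x_i-y_i|$. For each $i$ we have $x_i=y_i+(x_i-y_i)$ with $|x_i-y_i|\leq r$, hence $x_i\in\{y_i\}+[-r,r]\subseteq\{y_1,\ldots,y_n\}+[-r,r]$; taking the union over $i$ gives the inclusion $\{x_1,\ldots,x_n\}\subseteq\{y_1,\ldots,y_n\}+[-r,r]$. By the symmetric argument $\{y_1,\ldots,y_n\}\subseteq\{x_1,\ldots,x_n\}+[-r,r]$ as well, and the two inclusions together yield the displayed bound.

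Finally, since $\max_{i}|x_i-y_i|\leq\big(\sum_{i}|x_i-y_i|^2\big)^{1/2}$, the key estimate shows that the mapping is $1$-Lipschitz: given $\epsilon>0$, whenever two tuples lie at Euclidean distance less than $\epsilon$ their images lie within $h_{\Omega}$-distance $\epsilon$. In particular the mapping is (uniformly) continuous, as required. I expect the only genuine content to be the verification of the two set inclusions in the key step; the rest is formal, and notably no completeness or algebraic structure of $\Omega(\mathbb{R})$ is needed beyond the description of $S_r(0)$ as the interval $[-r,r]$.
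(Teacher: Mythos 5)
Your proof is correct. The paper's own ``proof'' of this lemma is the single sentence ``It is obvious,'' so there is no argument to compare against; your Lipschitz estimate $h_{\Omega}(\{x_1,\ldots,x_n\},\{y_1,\ldots,y_n\})\leq\max_i|x_i-y_i|$ is exactly the content the authors are taking for granted, and your two set inclusions (together with the observation that the closure in the definition of $+$ is harmless for a finite set plus $[-r,r]$) establish it cleanly.
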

\begin{proof}
  It is obvious.
\end{proof}
\begin{theorem}
  Let $A$ be a real Banach algebra. Let $\varphi$ be a quasi-character on $A$, as defined in example \ref{ex 37}. Then $\varphi$ is continuous and $\|\varphi\|\leq 1$. If $A$ is unital, then $\|\varphi\|=1$.
\end{theorem}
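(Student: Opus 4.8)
The plan is to reduce the whole statement to standard facts about the component characters $\varphi_1,\dots,\varphi_n$ and then reassemble them through the finite-set map of Lemma \ref{(x_i)->{x_i}continuous}. The decisive ingredient, which I would record first, is that each non-zero algebra-homomorphism $\varphi_i:A\to\mathbb{R}$ on a real Banach algebra is automatically continuous with $\|\varphi_i\|\le 1$; this is the classical automatic-continuity theorem for characters. Once this is in hand, both continuity and the norm estimate follow quickly, since by definition $\|B\|_{\Omega}=\sup_{b\in B}|b|$ gives $\|\varphi(x)\|_{\Omega}=\max_{1\le i\le n}|\varphi_i(x)|$.

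First I would prove continuity. The coordinate map $x\mapsto(\varphi_1(x),\dots,\varphi_n(x))$ from $A$ into $\mathbb{R}^n$ is continuous because each $\varphi_i$ is continuous, and the assignment $(x_1,\dots,x_n)\mapsto\{x_1,\dots,x_n\}$ from $\mathbb{R}^n$ into $\Omega(\mathbb{R})$ is continuous by Lemma \ref{(x_i)->{x_i}continuous}. Since $\varphi$ is the composition of these two maps, it is continuous with respect to the Hausdorff metric (recall that on $A$, the order being equality, the Hausdorff metric is just the norm metric). Next I would establish $\|\varphi\|\le 1$: from $|\varphi_i(x)|\le\|\varphi_i\|\,\|x\|_A\le\|x\|_A$ for every $i$ we obtain $\|\varphi(x)\|_{\Omega}=\max_i|\varphi_i(x)|\le\|x\|_A$, and hence $\|\varphi\|=\sup_{\|x\|_A=1}\|\varphi(x)\|_{\Omega}\le 1$.

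For the unital case I would first check that $\varphi_i(1)=1$ for each $i$: the relation $\varphi_i(1)=\varphi_i(1)^2$ forces $\varphi_i(1)\in\{0,1\}$, and $\varphi_i(1)=0$ would give $\varphi_i(x)=\varphi_i(x)\varphi_i(1)=0$ for all $x$, contradicting that $\varphi_i$ is non-zero. Consequently $\varphi(1)=\{1\}$, so $\|\varphi(1)\|_{\Omega}=1$, while $\|1\|_A=1$ by the normalisation convention for unital normed quasi-algebras. Therefore $\|\varphi\|\ge\|\varphi(1)\|_{\Omega}=1$, and together with the bound just proved this yields $\|\varphi\|=1$.

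The one step that requires genuine care, and which I expect to be the main obstacle, is the automatic-continuity fact $\|\varphi_i\|\le 1$ in the \emph{real} setting. I would either invoke the standard Banach-algebra result or argue directly by passing to the unitization $A^{+}$ and extending each $\varphi_i$ to a non-zero homomorphism $\tilde\varphi_i$. If $|\tilde\varphi_i(y)|>\|y\|$ held for some $y$, then after rescaling we could assume $\tilde\varphi_i(y)=1$ with $\|y\|<1$, so that $1-y$ is invertible via the geometric (Neumann) series; applying $\tilde\varphi_i$ to $(1-y)^{-1}(1-y)=1$ would give $\tilde\varphi_i\!\left((1-y)^{-1}\right)\bigl(1-\tilde\varphi_i(y)\bigr)=1$, i.e. $0=1$, a contradiction. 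This establishes $\|\varphi_i\|\le 1$ and in particular the continuity used above, and everything else in the argument is routine.
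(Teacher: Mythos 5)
Your proposal follows essentially the same route as the paper: continuity via composing the coordinate map with the finite-set map of Lemma \ref{(x_i)->{x_i}continuous}, the bound $\|\varphi\|\le 1$ from $\|\varphi(x)\|_{\Omega}=\max_i|\varphi_i(x)|$ together with $\|\varphi_i\|\le 1$, and equality in the unital case by evaluating at $1$. The only difference is that you supply proofs of the classical facts ($\|\varphi_i\|\le 1$ via the Neumann series, and $\varphi_i(1)=1$) that the paper simply cites, which is a harmless elaboration rather than a different argument.
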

\begin{proof}
  Suppose that every $\varphi_i:A\rightarrow\mathbb{R}$ is a non-zero algebra-homomorphism on A for $i=1,\ldots,n$ and let  $\varphi(x)=\{\varphi_1(x),\varphi_2(x),\ldots ,\varphi_n(x)\}$. Then, by lemma \ref{(x_i)->{x_i}continuous} ,$\varphi$ is continuous because every $\varphi_i$ is continuous \cite{2}.
  Also, since $\|\varphi_i\|\leq 1$, thus
  $$
    \|\varphi\|=\sup_{\|x\|_A=1} \|\varphi(x)\|_{\Omega}=\sup_{\|x\|_A=1}{\max\{|\varphi_1(x)|,|\varphi_2(x)|,\ldots,|\varphi_n(x)|\}}\leq 1 .
  $$
  If $A$ is unital then  $\varphi_i(1)=1$, therefor
  $$
  |\varphi(1)\|_{\Omega}=\|\{\varphi_1(1),\varphi_2(1),\ldots,\varphi_n(1)\}\|_{\Omega}=\|\{1\}\|_{\Omega}=1 .
  $$
  So $\|\varphi\|=1$.
\end{proof}

\subsection{\bf ORDER PRESERVING IN REVERSE}
Quasilinear operators and quasi-homomorphisms preserve the order. But there are examples, so that the inverse of a map has this property.
\begin{definition}
 Let $X$ and $Y$ be quasilinear spaces and $\varphi:X\rightarrow Y$ be a mapping. $\varphi$ is order preserving in reverse if $\varphi(x)\leq\varphi(y)$ concludes $x\leq y$ for every $x,y\in X$. In this case, we say $\varphi$ is an opr mapping.
\end{definition}
\begin{example}\label{iopExample}
Let $X={\Omega}_C(\mathbb{R})$ and
$$
\varphi :([a,b])\mapsto([\dfrac{(a+b)}{2}-\dfrac{(b-a)}{4},\dfrac{(a+b)}{2}+\dfrac{(b-a)}{4}]),\hspace{0.5cm} X\rightarrow X .
$$
Then $\varphi$ is an opr mapping. Geometrically, this function converts an interval into an interval with the same center and half length. Suppose that $A=[-2,2]$ and $B=[-4,4]$. Since $\varphi(A)=[-1,1]$, $\varphi(B)=[-2,2]$ and $\varphi(AB)=\varphi([-8,8])=[-4,4]$, so $\varphi(AB)\nsubseteq\varphi(A)\varphi(B)$. Therefore $\varphi$ is not a quasi-homomorphism. Of course, we could prove that this mapping does not have the order preserving property.\\
Let
$$
\psi :([a,b])\mapsto([\dfrac{(a+b)}{2}-(b-a),\dfrac{(a+b)}{2}+(b-a)]),\hspace{0.5cm}X\rightarrow X .
$$
 Since $\lbrace 3\rbrace\subseteq [-4,4]$ but  $\lbrace 3\rbrace\nsubseteq [-2,2]$ then $\psi$ is not an opr quasi-homomorphism. $\psi$ converts an interval into an interval with the same center and doubled in length.\\
Let
$$
\rho :([-a,a])\mapsto([-2a,2a]),\hspace{0.5cm}X_d\rightarrow X_d .
$$
Then $\rho$ is an opr quasi-homomorphism.
\end{example}
\begin{lemma}
Let $X$ and $Y$ be quasilinear spaces and $\varphi:X\rightarrow Y$ be an opr mapping. Then $\varphi$ is injective.
\end{lemma}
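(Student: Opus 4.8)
The plan is to show that equality of images forces equality of arguments, using only the defining property of an opr mapping together with the fact that ``$\leq$'' is a genuine partial order (hence antisymmetric). First I would fix two points $x,y\in X$ with $\varphi(x)=\varphi(y)$ and aim to conclude $x=y$.

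The key observation is that from the single equation $\varphi(x)=\varphi(y)$ one extracts two order relations for free: since any partial order is reflexive and $\varphi(x)$, $\varphi(y)$ are literally the same element of $Y$, we have simultaneously $\varphi(x)\leq\varphi(y)$ and $\varphi(y)\leq\varphi(x)$. Now the opr hypothesis applies in each direction separately: $\varphi(x)\leq\varphi(y)$ yields $x\leq y$, while $\varphi(y)\leq\varphi(x)$ yields $y\leq x$.

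Finally I would invoke antisymmetry of the partial order on $X$: from $x\leq y$ and $y\leq x$ we obtain $x=y$, which is precisely the required injectivity.

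There is essentially no genuine obstacle here; the only point that needs care is to remember that order preservation in reverse is a \emph{one-way} implication, so one must feed it the two inequalities individually rather than expecting it to act on the equation $\varphi(x)=\varphi(y)$ directly. It is also worth noting that no quasilinear, normed, or multiplicative structure enters the argument at all — the reasoning is purely order-theoretic and would hold verbatim for any map between partially ordered sets satisfying the stated reverse-monotonicity condition.
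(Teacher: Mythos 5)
Your proof is correct and is exactly the argument the paper has in mind: the paper dismisses the lemma as ``obvious by the reflexive property of the partial order relation,'' and your write-up simply spells out that one-liner (reflexivity gives $\varphi(x)\leq\varphi(y)$ and $\varphi(y)\leq\varphi(x)$, opr gives $x\leq y$ and $y\leq x$, antisymmetry gives $x=y$). No difference in approach; your version just makes explicit the use of antisymmetry alongside reflexivity.
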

\begin{proof}
It is obvious by the reflexive property of the partial order relation.
\end{proof}
\begin{theorem}
Suppose that $X$ and $Y$ are quasi-algebras and $\varphi :X\rightarrow Y$ is a surjective opr quasi-homomophism. Then for every $y_1,y_2\in Y$ and $\alpha\in\mathbb{R}$, the following conditions hold:
\begin{equation}
\begin{split}
& {\varphi}^{-1}(\alpha\cdot y_1)=\alpha\cdot{\varphi}^{-1}(y_1) ,\\
& {\varphi}^{-1}(y_1)+{\varphi}^{-1}(y_2)\leq{\varphi}^{-1}(y_1+y_2) ,\\
& {\varphi}^{-1}(y_1){\varphi}^{-1}(y_2)\leq{\varphi}^{-1}(y_1 y_2) .
\end{split}
\end{equation}
\end{theorem}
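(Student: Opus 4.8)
The plan is to first note that $\varphi$ is a bijection, so that $\varphi^{-1}$ is a well-defined function $Y\to X$: surjectivity is assumed, and injectivity follows from the preceding lemma, since every opr mapping is injective. I would then treat the three assertions separately, in each case setting $x_1=\varphi^{-1}(y_1)$ and $x_2=\varphi^{-1}(y_2)$, so that $\varphi(x_1)=y_1$ and $\varphi(x_2)=y_2$, and using the identity $\varphi(\varphi^{-1}(y))=y$ freely.

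For the scalar equality I would apply the first quasi-homomorphism axiom to obtain $\varphi(\alpha\cdot x_1)=\alpha\cdot\varphi(x_1)=\alpha\cdot y_1$; injectivity of $\varphi$ then forces $\varphi^{-1}(\alpha\cdot y_1)=\alpha\cdot x_1=\alpha\cdot\varphi^{-1}(y_1)$, which is the claim.

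For the two inequalities the argument is the same, and the opr hypothesis is exactly what drives it. For the additive one, the second axiom gives $\varphi(x_1+x_2)\leq\varphi(x_1)+\varphi(x_2)=y_1+y_2=\varphi(\varphi^{-1}(y_1+y_2))$. Applying the opr property to $\varphi(x_1+x_2)\leq\varphi(\varphi^{-1}(y_1+y_2))$ then yields $x_1+x_2\leq\varphi^{-1}(y_1+y_2)$, i.e. $\varphi^{-1}(y_1)+\varphi^{-1}(y_2)\leq\varphi^{-1}(y_1+y_2)$. The multiplicative inequality is identical, using the third axiom $\varphi(x_1x_2)\leq\varphi(x_1)\varphi(x_2)=y_1y_2$ in place of the additive one.

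The conceptual point---and the only step beyond routine bookkeeping---is the use of opr to carry inequalities from $Y$ back into $X$. Each quasi-homomorphism axiom produces an inequality whose left side is $\varphi$ of an element of $X$; rewriting the right side $y_1+y_2$ (resp.\ $y_1y_2$) as $\varphi(\varphi^{-1}(\cdot))$ places both sides in the image of $\varphi$, after which opr lets me cancel $\varphi$ and land in $X$. I expect no genuine obstacle: the equality rests only on injectivity, while each inequality needs just one quasi-homomorphism axiom followed by a single application of the opr property.
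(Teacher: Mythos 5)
Your proposal is correct and follows essentially the same route as the paper: set $x_i=\varphi^{-1}(y_i)$, use the quasi-homomorphism axioms to produce an identity or inequality between images under $\varphi$, and then invoke injectivity (for the scalar equality) and the opr property (for the two inequalities) to pull the relation back to $X$. If anything, your explicit rewriting of $y_1+y_2$ as $\varphi(\varphi^{-1}(y_1+y_2))$ before applying opr is slightly more careful than the paper's notation, which applies $\varphi^{-1}$ directly to an inequality, but the argument is the same.
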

\begin{proof}
 Let $\varphi(x_1)=y_1$ and $\varphi(x_2)=y_2$, then
 $$
{\varphi}^{-1}(\alpha\cdot y_1)={\varphi}^{-1}(\alpha\cdot\varphi(x_1))={\varphi}^{-1}(\varphi(\alpha\cdot  x_1))=\alpha\cdot x_1=\alpha\cdot{\varphi}^{-1}(y_1) .
 $$
Also, since $\varphi(x_1+x_2)\leq\varphi(x_1)+\varphi(x_2)$ and $\varphi$ is opr, so
\begin{equation}
\begin{split}
{\varphi}^{-1}(y_1)+{\varphi}^{-1}(y_2)=x_1+x_2 & ={\varphi}^{-1}(\varphi(x_1+x_2))\\
& \leq{\varphi}^{-1} (\varphi(x_1)+\varphi(x_2))\\
& \leq{\varphi}^{-1}(y_1+y_2) .
\end{split}
\end{equation}
The last part proves in a similar way.
\end{proof}
\begin{lemma}\label{phixInYrThenxInXr}
Let $X$ and $Y$ be quasi-algebras and $\varphi :X\rightarrow Y$ is a mapping such that 
$$
\varphi(\alpha\cdot x)=\alpha\cdot\varphi(x) ,
$$
$$
\varphi(x+y)\leq\varphi(x)+\varphi(y) ,
$$
and $ker (\varphi)=\lbrace 0\rbrace$. If $\varphi(x)\in Y_r$, then $x\in X_r$.
\end{lemma}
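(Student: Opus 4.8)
The plan is to reduce the whole statement to the regularity criterion proved earlier, namely that an element $w$ of a quasi-algebra is regular if and only if $w-w=0$. Since $\varphi(x)\in Y_r$ is, by that criterion, exactly the statement $\varphi(x)-\varphi(x)=0$, it suffices to establish the analogous identity in $X$, i.e. to show $x-x=0$; once this is known, the same criterion immediately gives $x\in X_r$. So the entire task becomes: transport the equation $\varphi(x)-\varphi(x)=0$ back across $\varphi$ to force $x-x=0$, using the two structural hypotheses on $\varphi$ together with $\ker(\varphi)=\{0\}$.

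The key computation is to feed $x-x$ through $\varphi$ and compare with $\varphi(x)-\varphi(x)$. First I would record that $\varphi$ respects negation: because $-x=(-1)\cdot x$, the homogeneity hypothesis gives $\varphi(-x)=(-1)\cdot\varphi(x)=-\varphi(x)$. Then applying the subadditivity hypothesis to the sum $x+(-x)$ yields
$$
\varphi(x-x)=\varphi\bigl(x+(-x)\bigr)\leq\varphi(x)+\varphi(-x)=\varphi(x)-\varphi(x)=0 ,
$$
where the final equality uses $\varphi(x)\in Y_r$. Thus $\varphi(x-x)\leq 0$.

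To finish, I would invoke minimality of zero (Lemma \ref{0isMinimal}): since $\varphi(x-x)\leq 0$, we must have $\varphi(x-x)=0$, so $x-x\in\ker(\varphi)=\{0\}$, whence $x-x=0$ and therefore $x\in X_r$ by the regularity criterion. The argument is short, and I do not anticipate a genuine obstacle; the only point requiring care is bookkeeping with the direction of the order relation. Subadditivity only delivers an \emph{upper bound} $\varphi(x-x)\leq\varphi(x)-\varphi(x)$, not an equality, so one cannot conclude $\varphi(x-x)=0$ directly — the essential move is to collapse ``$\leq 0$'' to ``$=0$'' via minimality of zero, and it is precisely here that the hypothesis $\ker(\varphi)=\{0\}$ is needed to pass from $\varphi(x-x)=0$ to the desired $x-x=0$.
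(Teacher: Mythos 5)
Your proposal is correct and follows essentially the same route as the paper: apply subadditivity and homogeneity to get $\varphi(x-x)\leq\varphi(x)-\varphi(x)=0$, collapse to equality by minimality of zero, and then use $\ker(\varphi)=\{0\}$ together with the criterion that regularity is equivalent to $x-x=0$. Your write-up merely makes explicit the steps (such as $\varphi(-x)=-\varphi(x)$) that the paper leaves implicit.
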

\begin{proof}
 Since $\varphi(x-x)\leq \varphi(x)-\varphi(x)=0$, then $\varphi(x-x)=0$ by lemma \ref{0isMinimal}. So x-x=0.
\end{proof}
It is obvious that if $\varphi$ is an injective quasi-homomorphism, then $ker (\varphi)=\lbrace 0\rbrace$. But the converse is not true necessarily.

\section{\bf THE GROUP OF UNITS}
Let $X$ be a unital quasi-algebra. Then $x\in X$ is left product-invertible (right product-invertible) if there exists $y\in X$ with $yx=1$ ($xy=1$). Furthermore, $x\in X$ is product-invertible if it is both left and right product-invertible. In this case, there exists a unique element $y\in X$ such that $yx=xy=1$. $y$ is called the product-inverse of $x$\ and it is denoted by $x^{-1}$.\\ We also say that $x$ is a unit of $X$ when it is product-invertible, and write $G=G(X)$ for the set of all these units. Since $1\in G(X)$ and $(xy)^{-1}=y^{-1} x^{-1}$ for $x,y\in G(X)$, then $(G(X),\ast)$ is a group.\\
\begin{example}
   Let $X=\Omega(\mathbb{R})$.The set of all singletons of real non-zero numbers $\{\{a\}:a\in\mathbb{R}\setminus\{0\}\}$ is the set of all product-invertible members of $X$. For every $r>0$ and $x\in G(X)$, we have
   $$
     [x-\frac{r}{2},x+\frac{r}{2}]\in\{y\in X:h_X(x,y)<r\}.
   $$
   So $G(X)$ is not open in $X$.
\end{example}
\begin{lemma}\label{equal}
  Let $X$ be a quasi-algebra with an identity $1$. If $x\in G(X)$ then $x(y+z)=xy+xz$ for every $y,z\in X$.
\end{lemma}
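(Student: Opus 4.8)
The plan is to prove the two inequalities $x(y+z)\le xy+xz$ and $xy+xz\le x(y+z)$ separately and then appeal to antisymmetry of the partial order to obtain equality. The first inequality is immediate: it is precisely the left-distributivity relation built into axiom~14, and it holds for every element, whether or not it is a unit. So all the content lies in the reverse inequality, and this is where the hypothesis $x\in G(X)$ must enter — in a general quasi-algebra distributivity genuinely fails, so something playing the role of a cancellation argument is unavoidable.

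To get $xy+xz\le x(y+z)$ I would first apply axiom~14 again, but now to the element $x^{-1}$, obtaining $x^{-1}(xy+xz)\le x^{-1}(xy)+x^{-1}(xz)$. Using associativity of the product (axiom~11) together with $x^{-1}x=1$ and the identity property, each summand on the right collapses: $x^{-1}(xy)=(x^{-1}x)y=1\cdot y=y$ and likewise $x^{-1}(xz)=z$. Hence $x^{-1}(xy+xz)\le y+z$.

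Next I would left-multiply this inequality by $x$, the legitimate mechanism being the monotonicity axiom~15: taking the reflexive relation $x\le x$ for the first factor and $x^{-1}(xy+xz)\le y+z$ for the second yields $x\bigl(x^{-1}(xy+xz)\bigr)\le x(y+z)$. Associativity together with $xx^{-1}=1$ then simplifies the left-hand side to $(xx^{-1})(xy+xz)=1\cdot(xy+xz)=xy+xz$, giving $xy+xz\le x(y+z)$ as required, and antisymmetry finishes the argument.

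The step I expect to be the only real subtlety is the middle one: one must resist the temptation to ``cancel $x$'' directly, since no cancellation law is available in a quasi-algebra. The point is that cancellation is replaced by the two-sided interplay between axiom~14 (distributivity as an inequality) applied to $x^{-1}$ and axiom~15 (monotonicity of the product) applied when re-multiplying by $x$, with associativity and the identity doing the bookkeeping. Everything else is routine.
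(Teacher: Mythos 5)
Your proposal is correct and follows essentially the same route as the paper: the paper's proof is exactly the chain $xy+xz=1(xy+xz)=(xx^{-1})(xy+xz)=x\bigl(x^{-1}(xy+xz)\bigr)\leq x(y+z)$, whose final inequality rests on the same combination of axiom~14 applied to $x^{-1}$ and the monotonicity axiom~15 that you spell out, with the reverse inequality and antisymmetry left implicit. Your write-up just makes those implicit steps explicit.
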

\begin{proof}
  Since $xy+xz\leq xy+xz$, thus
  $$
    xy+xz=1(xy+xz)=(xx^{-1})(xy+xz)=x(x^{-1}(xy+xz))\leq x(y+z) .
  $$
\end{proof}
This seemingly simple lemma will be the key to solving many problems.

\begin{corollary}\label{xinGthenxinXr}
  Let $X$ be a quasi-algebra with an identity $1$. If $1\in X_r$ then $x\in X_r$ for any $x\in G(X)$.
\end{corollary}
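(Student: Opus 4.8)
The plan is to reduce the claim to the single identity $x - x = 0$: by the earlier lemma characterising regular elements (namely, $x \in X_r$ if and only if $x - x = 0$), establishing $x - x = 0$ is exactly what is required. The two ingredients I would combine are the hypothesis $1 \in X_r$, which by that same characterisation already gives $1 - 1 = 0$, and the exact distributive law of Lemma \ref{equal}, which is available here precisely because units are product-invertible.

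First I would record the auxiliary fact that $w \ast 0 = 0$ for every $w \in X$: for any $y$ one has $0 = 0 \cdot (w \ast y) = w \ast (0 \cdot y) = w \ast 0$, using axiom 7 for the outer equalities and axiom 12 for the middle one. Next, since $x \in G(X)$ forces $x^{-1} \in G(X)$ as well, Lemma \ref{equal} applies to $x^{-1}$ and delivers the exact equality $x^{-1}(x + (-x)) = x^{-1}x + x^{-1}(-x)$. Here $x^{-1}x = 1$, while axiom 12 together with $-x = (-1)\cdot x$ gives $x^{-1}(-x) = (-1)\cdot(x^{-1}x) = -1$, so the right-hand side equals $1 - 1 = 0$. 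Thus $x^{-1}(x-x) = 0$. Finally I would multiply this relation on the left by $x$ and invoke associativity (axiom 11) and the identity property: $x\bigl(x^{-1}(x-x)\bigr) = (x x^{-1})(x-x) = 1\ast(x-x) = x - x$, whereas the same left-hand side equals $x \ast 0 = 0$ by the auxiliary fact. Comparing the two expressions yields $x - x = 0$, and hence $x \in X_r$.

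The step on which everything hinges is the passage through Lemma \ref{equal}. In a general quasi-algebra, axiom 14 supplies only the inequality $x^{-1}(x + (-x)) \le x^{-1}x + x^{-1}(-x)$, and a mere inequality would not force the mixed sum to collapse onto the regular element $1 - 1$; it is exactly the invertibility of $x^{-1}$ that upgrades this to an equality and thereby lets the regularity of $1$ propagate to $x$. I expect this to be the only delicate point, the remaining manipulations being routine consequences of axioms 7, 11 and 12.
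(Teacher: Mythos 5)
Your argument is correct, and it hinges on exactly the same key ingredient as the paper, namely the exact distributive law of Lemma \ref{equal} for units, combined with $1-1=0$ and $w\ast 0=0$. The only difference is that you take a small detour: you apply Lemma \ref{equal} to $x^{-1}$ distributing over $x+(-x)$ to get $x^{-1}(x-x)=1-1=0$, and then must multiply back by $x$ and use associativity to recover $x-x=0$; the paper instead applies the lemma directly to $x$ distributing over $1+(-1)$, which gives $x-x=x(1-1)=x\ast 0=0$ in a single line, with no need to pass to the inverse. Both are valid; the paper's choice of decomposition just makes the regularity of $1$ propagate to $x$ without the extra cancellation step.
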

\begin{proof}
  It follows from lemma \ref{equal} that $x(1-1)=x-x$. Since $x(1-1)=x 0=0$, thus $x-x=0$.
\end{proof}
\begin{theorem}
   Let $X$ be a commutative unital normed quasi-algebra. Then the mapping
   $$
     x\rightarrow x^{-1},\hspace{0.5 cm} G(X)\rightarrow G(X)
   $$
is a homeomorphism of $G(X)$ onto itself.
\end{theorem}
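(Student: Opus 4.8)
The plan is to notice first that inversion is an involution and therefore a bijection of $G(X)$ onto itself, which collapses the whole statement to a single continuity estimate. Writing $\phi(x)=x^{-1}$, the equality $(x^{-1})^{-1}=x$ gives $\phi\circ\phi=\mathrm{id}_{G(X)}$, so $\phi$ is its own inverse; in particular it is bijective. Hence, once I prove that $\phi$ is continuous, the continuity of $\phi^{-1}=\phi$ is automatic and $\phi$ is a homeomorphism. Everything thus reduces to showing that $x\mapsto x^{-1}$ is continuous on $G(X)$ with respect to the Hausdorff metric $h_X$.

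The heart of the argument is a Lipschitz-type estimate obtained by imitating the Banach-algebra identity $x^{-1}-y^{-1}=x^{-1}(y-x)y^{-1}$ using only products, so that additive inverses of possibly singular elements never appear. Fix $x,y\in G(X)$ and put $w=x^{-1}y^{-1}$. Commutativity, associativity (axiom 11), and the identity give the genuine equalities
$$
wy=x^{-1}(y^{-1}y)=x^{-1}\cdot 1=x^{-1},\qquad wx=y^{-1}(x^{-1}x)=y^{-1}\cdot 1=y^{-1}.
$$
Therefore $h_X(x^{-1},y^{-1})=h_X(wy,wx)$, and combining the previous theorem $h_X(ab,ac)\leq\|a\|_X h_X(b,c)$ with submultiplicativity of the norm (norm axiom 4) yields
$$
h_X(x^{-1},y^{-1})\leq\|w\|_X\,h_X(y,x)\leq\|x^{-1}\|_X\,\|y^{-1}\|_X\,h_X(x,y).
$$
Here commutativity is essential: it is what lets $w$ be pulled to either side so that multiplication by a \emph{single} element $w$ recovers both $x^{-1}$ and $y^{-1}$.

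To convert this into continuity at a fixed $x_0\in G(X)$, the remaining task is to control $\|y^{-1}\|_X$ as $y\to x_0$. This is the main obstacle: unlike in a Banach algebra, $G(X)$ need not be open (the example $X=\Omega(\mathbb{R})$ shows exactly this), so no Neumann-series argument is available to make inversion locally bounded. Instead I would use that the norm is $1$-Lipschitz for $h_X$: if $a\leq b+c$ then $\|a\|_X\leq\|b\|_X+\|c\|_X$ by norm axioms 2 and 5, and running this through the definition of $h_X$ in both directions gives $|\,\|a\|_X-\|b\|_X\,|\leq h_X(a,b)$. With $a=y^{-1}$, $b=x_0^{-1}$ this yields $\|y^{-1}\|_X\leq\|x_0^{-1}\|_X+h_X(x_0^{-1},y^{-1})$. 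Writing $M=\|x_0^{-1}\|_X$ (which is positive, since $x_0^{-1}\neq 0$), $d=h_X(x_0,y)$, and $D=h_X(x_0^{-1},y^{-1})$, substitution into the displayed estimate produces the self-referential inequality $D\leq M(M+D)d$, i.e. $D(1-Md)\leq M^2 d$. For $y$ close enough that $d<1/M$ this rearranges to $D\leq M^2 d/(1-Md)$, whose right-hand side tends to $0$ as $d\to 0$. Hence $h_X(x_0^{-1},y^{-1})\to 0$ whenever $h_X(x_0,y)\to 0$, giving continuity at $x_0$ and finishing the proof. The delicate point is precisely this decoupling of $D$ from the a priori unbounded factor $\|y^{-1}\|_X$; the self-referential inequality is what stands in for the openness of $G(X)$.
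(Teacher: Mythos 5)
Your proof is correct, and while the overall skeleton matches the paper's (inversion is an involution, hence bijective, so everything reduces to continuity; and both arguments ultimately exploit that multiplying by $w=x^{-1}y^{-1}$ turns $y$ into $x^{-1}$ and $x$ into $y^{-1}$), you handle the one genuinely delicate step --- the local boundedness of $y\mapsto\|y^{-1}\|_X$ --- by a different mechanism. The paper derives the identity $y^{-1}-x^{-1}=y^{-1}(x-y)x^{-1}$ via Lemma \ref{equal} (distributivity of units over sums), feeds the relation $x-y\leq y-y+c_1$ into it, and bootstraps the \emph{norm} inequality $\|y^{-1}\|\leq\tfrac12\|y^{-1}\|+(\|1-1\|+1)\|x^{-1}\|$, which forces the correction term $\|1-1\|$ into the choice of $\delta$. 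You instead bootstrap the \emph{metric} quantity $D=h_X(x_0^{-1},y^{-1})$ directly, using only the previously proved estimate $h_X(ab,ac)\leq\|a\|_Xh_X(b,c)$ together with the $1$-Lipschitz continuity of the norm for $h_X$ (which is legitimate: for any $r>h_X(a,b)$ the defining relations give $\bigl|\,\|a\|_X-\|b\|_X\,\bigr|\leq r$), arriving at $D(1-Md)\leq M^2d$ and hence an explicit bound $D\leq M^2d/(1-Md)$ for $d<1/M$. This buys you two things: you never invoke Lemma \ref{equal} or form differences of possibly singular elements, and the spurious $\|1-1\|$ factor disappears, leaving a clean local Lipschitz-type modulus. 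Two small points worth making explicit if you write this up: $D$ is finite a priori because $h_X(a,b)\leq\|a-b\|_X$, which is needed before you may rearrange $D\leq M(M+D)d$; and $w\neq 0$ (indeed $wx=y^{-1}\neq 0$), so the cited estimate $h_X(wy,wx)\leq\|w\|_Xh_X(y,x)$ applies without a degenerate division in its proof.
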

\begin{proof}
  Set $\omega(x)=x^{-1} (x\in G(X))$. Then certainly $\omega$ is a bijective mapping of  $G(X)$ onto itself. Also, since ${\omega}^{-1}=\omega$, it suffices to prove that $\omega$ is continuous.

  Let $x\in G(X)$. For every $\epsilon>0$, if
  $$
    \delta<\min{\{\frac{1}{2\|x^{-1}\|},\frac{\epsilon}{(2(\|1-1\|+1){\|x^{-1}\|}^2 )}\}}
  $$
  for $y\in G(X)$ with $h_X(x,y)<\delta$, there are $c_1,c_2\in X$ such that
  $$
    x\leq y+c_1, \|c_1\|<\delta ,
  $$
  $$
    y\leq x+c_2, \|c_2\|<\delta .
  $$
  Thus
  $$
    x-y\leq y-y+c_1 .
  $$
  Since $x^{-1},y^{-1}\in G(X)$, it follows from lemma \ref{equal} that 
  $$
  y^{-1}-x^{-1}=y^{-1}(x-y)x^{-1}.
  $$
  So
  \begin{equation}
  \begin{split}
      \|y^{-1}\| & \leq\|y^{-1}-x^{-1}\|+\|x^{-1}\| \\
       & =\|y^{-1}(x-y)x^{-1}\|+\|x^{-1}\| \\
       & \leq\|y^{-1}(y-y+c_1)x^{-1}\|+\|x^{-1}\| \\
       & \leq\|(1-1+y^{-1}c_1)x^{-1}\|+\|x^{-1}\| \\
       &  \leq(\|1-1\|+\|y^{-1}\|\|c_1\|)\|x^{-1}\|+\|x^{-1}\| \\
       & \leq(\|y^{-1}\|\|c_1\|)\|x^{-1}\|+(\|1-1\|+1)\|x^{-1}\| \\
       & \leq\frac{\|y^{-1}\|}{2}+(\|1-1\|+1)\|x^{-1}\| .
  \end{split}
  \end{equation}
  Hence $\|y^{-1}\|\leq 2(\|1-1\|+1)\|x^{-1}\|$.

  Also, because $x^{-1}y^{-1}\leq x^{-1}y^{-1}$ then
  $$
    x^{-1}y^{-1}x\leq x^{-1}y^{-1}y+x^{-1}y^{-1}c_1, \|x^{-1}y^{-1}c_1\|\leq\|x^{-1}\|\|y^{-1}\|\|c_1\|<\epsilon ,
  $$
  $$
    x^{-1}y^{-1}y\leq x^{-1}y^{-1}x+x^{-1}y^{-1}c_2, \|x^{-1}y^{-1}c_2\|\leq\|x^{-1}\|\|y^{-1}\|\|c_2\|<\epsilon ,
  $$
  so $h_X(x^{-1},y^{-1})<\epsilon$, and the result follows.
\end{proof}

\section{\bf QUASI-SPECTRUM}
Spectrum is an integral part of an algebra. Spectral theory, which has numerous applications, should also be extended to quasi-algebras. So we introduce the concept of quasi-spectrum that is a generalisation of algebra spectrum.
\begin{definition}
Let $A$ be a real algebra with an identity, and let $a\in A$. Then the spectrum of $a$, denoted by ${Sp}_A (a)$ (or usually $Sp(a)$) is defined as:
$$
{Sp}_A (a)=\lbrace\lambda\in\mathbb{R}:\lambda 1-a\notin G(A)\rbrace .
$$
\end{definition}
\begin{definition}
Let $X$ be a quasi-algebra with an identity $1\in X_r$, and let $x\in X$. Then the quasi-spectrum of $x$, denoted by ${QSp}_X (x)$ or $QSp(x)$ is defined as:
$$
{QSp}_X (x)={\cup}_{(t\in{X_r},t\leq x)} {Sp}_{X_r} (t)=\lbrace {Sp}_{X_r}(t):t\in X_r,t\leq x\rbrace .
$$
\end{definition}

\begin{question}
Where did the idea for this definition come from?
\end{question}
We wanted the quasi-spectrum of a subset of an algebra to be equal to the union of the spectrum of its elements. Also, by involving the spectrum in the definition, we make the most of its benefits. In the following examples and theorems, you will see its effects.

\begin{example}
Let $A$ be a real algebra with an identity, and let $a\in A$. We already saw that $A$ is a quasi-algebra with the partial order defined by equality and then
$$
{QSp}_A(a)={Sp}_A(a) .
$$
\end{example}
\begin{example}
Let $X=\Omega(\mathbb{R})$ and $x\in X$. then ${QSp}_X(x)=x$.
\end{example}
\begin{lemma} \cite{2} \label{sp(ab)in0sp(ba)}
Let $A$ be a real unital algebra, and $a,b\in A$. Then
$$
Sp (ab)\subseteq\lbrace 0\rbrace \cup Sp (ba).
$$
\end{lemma}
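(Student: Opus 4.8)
The plan is to prove the equivalent contrapositive statement: for any nonzero $\lambda\in\mathbb{R}$, if $\lambda 1-ba\in G(A)$ then $\lambda 1-ab\in G(A)$. A scalar $\lambda\neq 0$ lies outside $Sp(ab)$ exactly when $\lambda 1-ab$ is a unit, so this implication shows that every nonzero element of $Sp(ab)$ must belong to $Sp(ba)$, which is precisely the asserted inclusion. Thus I would fix $\lambda\neq 0$, assume $u:=(\lambda 1-ba)^{-1}$ exists, and manufacture an inverse for $\lambda 1-ab$.

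To guess that inverse I would run the formal Neumann series: writing $(\lambda 1-ab)^{-1}=\lambda^{-1}\sum_{n\ge 0}\lambda^{-n}(ab)^n$ and using $(ab)^n=a(ba)^{n-1}b$ for $n\ge 1$, the series collapses into the closed form
$$
(\lambda 1-ab)^{-1}=\tfrac{1}{\lambda}\bigl(1+a\,u\,b\bigr).
$$
This manipulation is only heuristic, since there is no norm or convergence available here; its sole purpose is to produce the candidate $v:=\lambda^{-1}(1+aub)$. The actual proof is then the purely algebraic verification that $v$ is a genuine two-sided inverse.

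The heart of the argument is a direct multiplication resting on one key identity. From $u(\lambda 1-ba)=(\lambda 1-ba)u=1$ I obtain both $bau=\lambda u-1$ and $u\,ba=\lambda u-1$. Substituting these into the products $(\lambda 1-ab)v$ and $v(\lambda 1-ab)$ and simplifying, the cross terms $\lambda aub$ and $ab$ cancel, and each product reduces to $1$. Hence $\lambda 1-ab\in G(A)$, which completes the contrapositive and therefore the lemma.

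The main obstacle is expository rather than conceptual: one must hit upon the correct closed form $\lambda^{-1}(1+aub)$ and, crucially, keep the factor $\lambda^{-1}$ in front—this is exactly why $0$ cannot be handled this way and must be adjoined as $\{0\}\cup Sp(ba)$. Once the formula is in hand, the verification is a routine rearrangement using only associativity and the relation $bau=\lambda u-1$; no analytic, topological, or order-theoretic structure of $A$ is needed, so the statement holds for an arbitrary real unital algebra.
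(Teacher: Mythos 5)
Your proof is correct. The paper does not prove this lemma at all—it simply cites it from reference \cite{2}—and your argument is the standard one found there: for $\lambda\neq 0$ with $u=(\lambda 1-ba)^{-1}$, the candidate $v=\lambda^{-1}(1+aub)$ is verified to be a two-sided inverse of $\lambda 1-ab$ via the identities $bau=\lambda u-1$ and $uba=\lambda u-1$, and your remark about why $\lambda=0$ must be excluded is exactly the right observation.
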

We note that, in this section, from now on, all quasi-algebras have an identity, and it is regular.
\begin{theorem}
Let $X$ be a quasi-algebra, and $x,y\in X$. If $x\in G(X)$ Then
$$
QSp(xy)\subseteq\lbrace 0\rbrace\cup QSp(yx) .
$$
\end{theorem}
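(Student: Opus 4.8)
The plan is to reduce the statement to the purely algebraic fact Lemma \ref{sp(ab)in0sp(ba)}, applied inside the real algebra $X_r$. First I would unwind the definition: a scalar $\lambda\in QSp(xy)$ means there is a regular element $s\in X_r$ with $s\leq xy$ and $\lambda\in Sp_{X_r}(s)$. The goal is then to produce a single regular element $t$ with $t\leq yx$ such that $\lambda$ is captured by $\{0\}\cup Sp_{X_r}(t)$, for then $\lambda\in\{0\}\cup QSp(yx)$ directly from the definition of $QSp(yx)$.

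The natural candidate is the conjugate $t=x^{-1}sx$. Since the standing hypothesis of this section is that $1\in X_r$, and $x\in G(X)$, Corollary \ref{xinGthenxinXr} gives $x\in X_r$; as $x^{-1}$ is also a unit, the same corollary gives $x^{-1}\in X_r$. Because $X_r$ is a real algebra containing $s$, the product $t=x^{-1}sx$ again lies in $X_r$. To see that $t\leq yx$, I would apply the monotonicity of the product (axiom 15) twice to the inequality $s\leq xy$: multiplying on the right by $x$ (using $x\leq x$) yields $sx\leq (xy)x$, and then multiplying on the left by $x^{-1}$ (using $x^{-1}\leq x^{-1}$) yields $x^{-1}sx\leq x^{-1}\bigl((xy)x\bigr)$. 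Associativity (axiom 11) together with $x^{-1}x=1$ and the fact that $1$ is the identity collapses the right-hand side to $yx$, so $t\leq yx$.

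With $t=x^{-1}sx\in X_r$ and $t\leq yx$ in hand, the spectral step is exactly Lemma \ref{sp(ab)in0sp(ba)} used in $X_r$ with the choice $a=x$ and $b=x^{-1}s$: here $ab=(xx^{-1})s=s$ and $ba=x^{-1}sx=t$, so the lemma reads $Sp_{X_r}(s)\subseteq\{0\}\cup Sp_{X_r}(t)$. Chaining the inclusions gives $\lambda\in Sp_{X_r}(s)\subseteq\{0\}\cup Sp_{X_r}(t)\subseteq\{0\}\cup QSp(yx)$, which is precisely the assertion; no case split on whether $\lambda=0$ is even needed.

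I expect the one genuinely nontrivial point to be the order estimate $x^{-1}sx\leq yx$. It is tempting to ``cancel'' $x^{-1}$ against $x$ freely, but in a quasi-algebra an inequality may only be propagated through the monotonicity axiom 15 and rearranged by associativity, never by subtraction or by a distributive law that can fail here; so the crux is tracking that the regularity of $x$ (so that $x^{-1}x=1$ really acts as the identity on the product in question) is what licenses the collapse of the right-hand side to $yx$. Everything else is bookkeeping with the definitions, and I would take care to orient the roles of $a$ and $b$ in Lemma \ref{sp(ab)in0sp(ba)} so that the retained term $\{0\}$ lands on the $Sp_{X_r}(s)$ side, matching the $\{0\}$ in the statement.
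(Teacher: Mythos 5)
Your proof is correct and takes essentially the same route as the paper: conjugate the regular witness $s\leq xy$ by $x$ to obtain $x^{-1}sx\leq yx$ and then invoke Lemma~\ref{sp(ab)in0sp(ba)} inside the real algebra $X_r$. The only minor difference is that the paper certifies the regularity of the conjugate by a direct computation with Lemma~\ref{equal} (namely $0=x^{-1}(s-s)x=x^{-1}sx-x^{-1}sx$) instead of citing $x,x^{-1}\in X_r$ together with closure of $X_r$ under products, while your write-up is more explicit about the order estimate $x^{-1}sx\leq yx$ and avoids the case split on $\lambda=0$.
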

\begin{proof}
Suppose that $\lambda\neq 0$ is an element of $QSp(xy)$. So there exists $z\in X_r$ such that $z\leq xy$ and $\lambda\in Sp(z)$. Since
$$
0=x^{-1} 0 x=x^{-1} (z-z)x=x^{-1} zx-x^{-1} zx
$$
by lemma \ref{equal}. Then $x^{-1} zx\in X_r$ and so $\lambda\in Sp(x^{-1}zx)$ by lemma \ref{sp(ab)in0sp(ba)}. Because $x^{-1} zx\leq yx$ then $\lambda\in QSp(yx)$.
\end{proof}
\begin{lemma}\label{phi(1-x)=1-phi(x)}
Let $\varphi :X\rightarrow Y$ be a mapping such that the following conditions hold
\begin{enumerate}
  \item $\varphi(\alpha\cdot x)=\alpha\cdot\varphi(x)$,
  \item $\varphi(x+y)\leq\varphi(x)+\varphi(y)$,
  \item  $\varphi(1)\leq\varphi(1)\varphi(1)$,\\
 moreover
  \item $\varphi(G(X))\subseteq G(Y)$.
\end{enumerate}
 If $\varphi(x)\in Y_r$ then $\varphi(1-x)=1-\varphi(x)$.
\end{lemma}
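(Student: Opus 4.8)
The plan is to first determine $\varphi(1)$ and then squeeze $\varphi(1-x)$ against $1-\varphi(x)$ using the minimality of regular elements. The key preliminary claim is that $\varphi(1)=1$. Since $1\cdot 1=1$ we have $1\in G(X)$, so condition~4 gives $\varphi(1)\in G(Y)$; and because $1_Y\in Y_r$ (the standing assumption of this section), Corollary~\ref{xinGthenxinXr} yields $\varphi(1)\in Y_r$. I would then multiply the relation $\varphi(1)\leq\varphi(1)\varphi(1)$ of condition~3 on the left by the product-inverse $\varphi(1)^{-1}$: left multiplication preserves the order (axiom~15, using the reflexive relation $\varphi(1)^{-1}\leq\varphi(1)^{-1}$), and associativity together with $\varphi(1)^{-1}\varphi(1)=1$ gives $1\leq\varphi(1)$. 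As $\varphi(1)$ is regular it is minimal, so $1\leq\varphi(1)$ forces $\varphi(1)=1$.

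With $\varphi(1)=1$ in hand, the rest is short. Writing $1-x=1+(-1)\cdot x$ and applying condition~2 and then condition~1 gives
$$
\varphi(1-x)\leq\varphi(1)+(-1)\cdot\varphi(x)=1-\varphi(x).
$$
To turn this inequality into an equality I would use regularity again: by hypothesis $\varphi(x)\in Y_r$, and $1\in Y_r$, so $1-\varphi(x)\in Y_r$ since $Y_r$ is an algebra. Thus $1-\varphi(x)$ is minimal, and the inequality $\varphi(1-x)\leq 1-\varphi(x)$ collapses to $\varphi(1-x)=1-\varphi(x)$, as required.

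I expect the only real obstacle to be the claim $\varphi(1)=1$. The delicate point is to exploit the seemingly one-sided hypothesis $\varphi(1)\leq\varphi(1)\varphi(1)$: invertibility of $\varphi(1)$ from condition~4 converts it into the bound $1\leq\varphi(1)$, and only the regularity of $\varphi(1)$ (again via Corollary~\ref{xinGthenxinXr}) lets one promote this one-sided bound to equality through minimality. Once $\varphi(1)=1$ is secured, the remaining steps are routine applications of conditions~1 and~2 together with the fact that every regular element is minimal.
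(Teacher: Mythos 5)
Your proposal is correct and follows essentially the same route as the paper: establish $\varphi(1)\in G(Y)\subseteq Y_r$ via condition~4 and Corollary~\ref{xinGthenxinXr}, convert $\varphi(1)\leq\varphi(1)\varphi(1)$ into $1\leq\varphi(1)$ by multiplying with $\varphi(1)^{-1}$, conclude $\varphi(1)=1$ by minimality of regular elements, and then collapse $\varphi(1-x)\leq 1-\varphi(x)$ to equality using the minimality of the regular element $1-\varphi(x)$. You are in fact more explicit than the paper about the intermediate steps (the use of axiom~15 and associativity, and why $1-\varphi(x)\in Y_r$), but the argument is the same.
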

\begin{proof}
$\varphi(1)\in G(Y)$ then $\varphi(1)\in Y_r$ by corollary \ref{xinGthenxinXr}. Since $\varphi(1)\leq \varphi(1)\varphi(1)$, then $1\leq\varphi(1)$. So $\varphi(1)=1$ by minimality of regular elements. Since $\varphi(1)-\varphi(x)$ is minimal hence
$$
1-\varphi(x)=\varphi(1)-\varphi(x)=\varphi(1-x) .
$$
\end{proof}
\begin{theorem}\label{QspphixInQspx}
Let $X$ and $Y$ be quasi-algebras and $\varphi :X\rightarrow Y$ be an opr mapping such that four conditions of lemma \ref{phi(1-x)=1-phi(x)} are true. Then
$$
{QSp}_Y (\varphi(x))\subseteq {QSp}_X (x)\cup({\cup}_{(y\in Y')}{Sp}_Y (y))\hspace{1cm}(x\in X) ,
$$
when $Y'={(Im(\varphi))}^c\cap Y_r$.
\end{theorem}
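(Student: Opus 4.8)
The plan is to unfold the definition of the quasi-spectrum and to split on whether the regular element witnessing a given $\lambda$ comes from the image of $\varphi$. So fix $\lambda\in QSp_Y(\varphi(x))$. By definition there is some $w\in Y_r$ with $w\leq\varphi(x)$ and $\lambda\in Sp_{Y_r}(w)$ (here I read $Sp_Y$ as the spectrum taken in the algebra $Y_r$, the only meaningful reading since $Y'\subseteq Y_r$). The dichotomy is then $w\notin Im(\varphi)$ versus $w\in Im(\varphi)$. In the first case $w\in (Im(\varphi))^{c}\cap Y_r=Y'$, so $\lambda\in Sp_{Y_r}(w)\subseteq\bigcup_{y\in Y'}Sp_Y(y)$ and there is nothing more to do; all the work is in the second case.

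So suppose $w=\varphi(u)$ for some $u\in X$. First I would note that $\varphi$ is opr and hence injective by the preceding lemma, and since $\varphi(0)=\varphi(0\cdot x)=0\cdot\varphi(x)=0$ this gives $ker(\varphi)=\{0\}$; therefore lemma \ref{phixInYrThenxInXr} applies and $\varphi(u)=w\in Y_r$ forces $u\in X_r$. Applying the opr property to $\varphi(u)\leq\varphi(x)$ also gives $u\leq x$. Thus $u$ is a regular element below $x$, and it suffices to show $\lambda\in Sp_{X_r}(u)$, for then $\lambda\in QSp_X(x)$ by definition.

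I would prove $\lambda\in Sp_{X_r}(u)$ by contradiction: suppose $\lambda 1-u\in G(X_r)\subseteq G(X)$, so that condition 4 of lemma \ref{phi(1-x)=1-phi(x)} gives $\varphi(\lambda 1-u)\in G(Y)$. The key computation is the identity $\varphi(\lambda 1-u)=\lambda 1-\varphi(u)$, obtained exactly as in lemma \ref{phi(1-x)=1-phi(x)}: conditions 1 and 2 yield $\varphi(\lambda 1-u)\leq\lambda\varphi(1)-\varphi(u)=\lambda 1-\varphi(u)$ (using $\varphi(1)=1$ from that lemma's proof), and since $\varphi(u)\in Y_r$ the element $\lambda 1-\varphi(u)$ lies in the algebra $Y_r$ and is therefore minimal, forcing equality. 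Hence $\lambda 1-\varphi(u)\in G(Y)$; because $1\in Y_r$, corollary \ref{xinGthenxinXr} gives $G(Y)\subseteq Y_r$, so both $\lambda 1-\varphi(u)$ and its product-inverse lie in $Y_r$, i.e.\ $\lambda 1-\varphi(u)\in G(Y_r)$. This contradicts $\lambda\in Sp_{Y_r}(w)=Sp_{Y_r}(\varphi(u))$, so $\lambda\in Sp_{X_r}(u)\subseteq QSp_X(x)$ and the second case is done.

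I expect the main obstacle to be the identity $\varphi(\lambda 1-u)=\lambda 1-\varphi(u)$ together with keeping straight the several nested notions of invertibility and regularity in play: membership in $G(Y)$ versus $G(Y_r)$, and regularity in $Y$ versus in $Y_r$. The bridge is the standing assumption $1\in Y_r$, which via corollary \ref{xinGthenxinXr} makes $G(Y)\subseteq Y_r$, so that product-invertibility in $Y$ and in $Y_r$ agree on elements of $Y_r$; once this is pinned down the contradiction is immediate. The case split on the image is the conceptual point but is routine once the definition of $QSp$ is expanded.
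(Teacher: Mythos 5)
Your proof is correct and follows essentially the same route as the paper's: split on whether the regular witness $w\leq\varphi(x)$ lies in $\mathrm{Im}(\varphi)$, pull it back to a regular $u\leq x$ via the opr property and Lemma \ref{phixInYrThenxInXr}, and transfer non-invertibility through the identity $\varphi(\lambda 1-u)=\lambda 1-\varphi(u)$ together with condition 4. You are in fact somewhat more careful than the paper, which silently assumes $\ker(\varphi)=\{0\}$ when invoking Lemma \ref{phixInYrThenxInXr}, uses Lemma \ref{phi(1-x)=1-phi(x)} for general $\lambda$ rather than just $\lambda=1$, and glosses over the distinction between $G(Y)$ and $G(Y_r)$ — all points you address explicitly.
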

\begin{proof}
If $\lambda\in{QSp}_Y \varphi (x)$ and $\lambda\notin{\cup}_{(y\in Y')}{Sp}_Y y$ , so there exists $\varphi(t)\in Y_r$  such that $\varphi(t)\leq \varphi(x)$ and $\lambda\in{Sp}_Y \varphi(t)$. Thus $\lambda 1-\varphi(t)\notin G(Y)$. Therefor $\lambda 1-t\notin G(X)$ by lemma \ref{phi(1-x)=1-phi(x)}, and because $t\leq x$ by opr property, and since $t\in X_r$ by lemma \ref{phixInYrThenxInXr}, then $\lambda\in {Sp}_X t$.
\end{proof}
\begin{example}
Consider the function $\varphi$ which was defined in example \ref{iopExample}. Since $Im (\varphi)=X$, then
$$
{QSp}_X \varphi(x)\subseteq {QSp}_X x.
$$
\end{example}
\begin{corollary}
The result of theorem \ref{QspphixInQspx} is true when $\varphi :X\rightarrow Y$ is an opr quasi-homomorphism such that $\varphi(G(X))\subseteq G(Y)$.
\end{corollary}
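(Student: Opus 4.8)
The plan is to show that this corollary is a direct specialisation of Theorem \ref{QspphixInQspx}: I claim that every opr quasi-homomorphism $\varphi:X\rightarrow Y$ satisfying $\varphi(G(X))\subseteq G(Y)$ already meets all the hypotheses of that theorem, so the stated inclusion transfers verbatim. Thus the task reduces entirely to a bookkeeping check of hypotheses, with no new estimates required.

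First I would observe that $\varphi$ is assumed to be opr, which is exactly the ``opr mapping'' requirement of Theorem \ref{QspphixInQspx}. It then remains to verify the four conditions of Lemma \ref{phi(1-x)=1-phi(x)}. Conditions 1 and 2, namely $\varphi(\alpha\cdot x)=\alpha\cdot\varphi(x)$ and $\varphi(x+y)\leq\varphi(x)+\varphi(y)$, are precisely the first two axioms in the definition of a quasi-homomorphism, so they hold automatically. Condition 4, $\varphi(G(X))\subseteq G(Y)$, is the explicit extra hypothesis imposed in the corollary.

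The only condition requiring a short argument is condition 3, $\varphi(1)\leq\varphi(1)\varphi(1)$. I would deduce it from the third quasi-homomorphism axiom $\varphi(xy)\leq\varphi(x)\varphi(y)$ by taking $x=y=1$: since $1$ is the identity of $X$ we have $1\cdot 1=1$, hence $\varphi(1)=\varphi(1\cdot 1)\leq\varphi(1)\varphi(1)$. This single line is the whole of the substantive work.

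With all four conditions of Lemma \ref{phi(1-x)=1-phi(x)} verified and $\varphi$ known to be opr, Theorem \ref{QspphixInQspx} applies directly and yields the asserted inclusion. I do not anticipate any genuine obstacle here: the mathematical content lives entirely in Theorem \ref{QspphixInQspx} and Lemma \ref{phi(1-x)=1-phi(x)}, and this corollary simply records that the more natural hypothesis of being an opr quasi-homomorphism together with invariance of the group of units, $\varphi(G(X))\subseteq G(Y)$, is strong enough to trigger them.
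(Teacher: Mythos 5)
Your proposal is correct and is exactly the argument the paper intends (the corollary is stated without proof, being a direct specialisation): conditions 1, 2, and 4 of Lemma \ref{phi(1-x)=1-phi(x)} are immediate from the quasi-homomorphism axioms and the extra hypothesis $\varphi(G(X))\subseteq G(Y)$, and condition 3 follows from $\varphi(1)=\varphi(1\cdot 1)\leq\varphi(1)\varphi(1)$. Nothing further is needed.
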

\begin{example}
Suppose that $X={\Omega}_C(\mathbb{R})$, $Y={\Omega}_C(\mathbb{C})$ and
$$
\varphi([a,b])\mapsto\lbrace z\in \mathbb{C}:\Vert z-\dfrac{a+b}{2}\Vert\leq\dfrac{b-a}{2}\rbrace,\hspace{0.5cm}   X\rightarrow Y .
$$
Let $Y'={(Im (\varphi))}^c\cap Y_r$. Since $({\cup}_{(y\in Y')}{Sp}_Y y)=\emptyset$ so
$$
{QSp}_X \varphi(x)\subseteq {QSp}_X x .
$$
We prove that $\varphi$ is a quasi-homomorphism. Suppose that $A_1,A_2\in {\Omega}_C (\mathbb{R})$, also $\varphi(A_1)$ and $\varphi(A_2)$ are circles with center $a_1\in\mathbb{R}$ and $a_2\in\mathbb{R}$, respectively. Because $A_1\subseteq\varphi(A_1)$  and $A_2\subseteq\varphi(A_2)$, so
$$
A_1 A_2\subseteq \varphi(A_1)\varphi(A_2).
$$
Moreover
\begin{equation}
\begin{split}
\varphi(A_1)\varphi(A_2) & =(\varphi(A_1)-a_1+a_1)(\varphi(A_2)-a_2+a_2) \\
& =(\varphi(A_1)-a_1)(\varphi(A_2)-a_2)+(\varphi(A_1)-a_1)a_2\\
& \hspace*{0.4 cm} +(\varphi(A_2)-a_2)a_1+a_1 a_2 .
\end{split}
\end{equation}
Thus $\varphi(A_1)\varphi(A_2)$ is a circle with center on the axis of real numbers . Then it contains the circle of diameter $A_1 A_2$ and so
$$
\varphi(A_1 A_2)\subseteq\varphi(A_1)\varphi(A_2).
$$
 Proof of other parts is similar and left to the reader.
\end{example}
\begin{lemma} \label{XrIsB-algebra}
Let $X$ be a Banach quasi-algebra. Then $X_r$ is a real Banach algebra.
\end{lemma}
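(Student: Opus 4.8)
The plan is to exhibit $X_r$ as a closed subset of the complete metric space $(X,h_X)$ and then transfer completeness to the norm. First I would note that the algebraic part of the claim is already available: it was observed earlier that $X_r$ is a real algebra under the operations inherited from $X$, and the restriction of $\|\cdot\|_X$ remains a genuine algebra norm on $X_r$, since the norm axioms (positivity, the triangle inequality, homogeneity, and submultiplicativity $\|xy\|_X\leq\|x\|_X\|y\|_X$) pass verbatim to a subset. Thus $(X_r,\|\cdot\|_X)$ is a real normed algebra, and the only substantive thing to prove is that it is complete in the norm.

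The bridge between the two notions of completeness is the theorem stating that $h_X(x,y)=\|x-y\|_X$ whenever $y\in X_r$; applied to $x,y\in X_r$ it shows that on $X_r$ the Hausdorff metric and the norm metric coincide. Consequently it suffices to prove that $X_r$ is closed in $(X,h_X)$, because a closed subset of a complete metric space is complete, and the metric coincidence then upgrades metric-completeness to norm-completeness. To establish closedness, I would take a sequence $(x_n)\subseteq X_r$ with $x_n\to x_0$ in $h_X$ and invoke the characterization that $x\in X_r$ if and only if $x-x=0$, so that $x_n-x_n=0$ for every $n$. Since the algebraic sum and multiplication by real numbers are continuous with respect to $h_X$ by Theorem \ref{continuity of operations}, the map $x\mapsto x-x=x+(-1)\cdot x$ is continuous, whence $x_n-x_n\to x_0-x_0$. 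By uniqueness of limits and the fact that $x_n-x_n\equiv 0$, we obtain $x_0-x_0=0$, that is, $x_0\in X_r$; hence $X_r$ is closed.

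Putting these together, $X_r$ is closed in $(X,h_X)$ and therefore complete in $h_X$, and since $h_X$ agrees with the norm metric on $X_r$, the space $(X_r,\|\cdot\|_X)$ is complete; combined with its normed-algebra structure this makes $X_r$ a real Banach algebra. The only genuinely delicate point is the metric identification: completeness of $X$ is phrased in terms of the Hausdorff metric, while a Banach algebra demands completeness in the norm, and these differ in general on $X$. The theorem $h_X(x,y)=\|x-y\|_X$ for regular $y$ is exactly what reconciles them on $X_r$, so I expect the main obstacle to be making this coincidence explicit; the continuity argument for closedness is then routine.
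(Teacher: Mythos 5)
Your proof is correct and takes essentially the same route as the paper's: take a Cauchy sequence in $X_r$, use completeness of $X$ together with the continuity of the operations (Theorem \ref{continuity of operations}) to get $x_n-x_n\to x_0-x_0$, and conclude $x_0-x_0=0$, i.e.\ $x_0\in X_r$. Your explicit identification of the Hausdorff metric with the norm metric on $X_r$ via $h_X(x,y)=\|x-y\|_X$ for regular $y$ is a step the paper leaves implicit, and it is indeed what is needed to pass from completeness in $h_X$ to norm completeness of the Banach algebra.
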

\begin{proof}
 Suppose that ${x_i}$ is a Cauchi sequence in $X_r$. Since $X$ is complete, there exists $x\in X$ such that $x_i\rightarrow x$ and $-x_i\rightarrow -x$. Thus $x_i-x_i\rightarrow x-x$ and since $x_i-x_i\rightarrow 0$, so $x-x=0$. Therefore $x\in X_r$.	
\end{proof}
\begin{lemma}\cite{2} \label{spa<|a|}
Let $A$ be a real Banach algebra, and let  $a\in A$. Then $Sp(a)$ is a subset of the disk $\lbrace\lambda\in\mathbb{R}:\vert\lambda\vert\leq\Vert a\Vert\rbrace$.
\end{lemma}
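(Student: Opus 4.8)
The plan is to prove the contrapositive: if $|\lambda|>\|a\|$, then $\lambda\notin Sp(a)$, i.e. $\lambda 1-a\in G(A)$. Since this shows that no $\lambda$ with $|\lambda|>\|a\|$ can lie in $Sp(a)$, it follows at once that every $\lambda\in Sp(a)$ satisfies $|\lambda|\leq\|a\|$, which is the asserted inclusion $Sp(a)\subseteq\lbrace\lambda\in\mathbb{R}:|\lambda|\leq\|a\|\rbrace$.

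First I would fix $\lambda\in\mathbb{R}$ with $|\lambda|>\|a\|$ (so in particular $\lambda\neq 0$) and factor
$$
\lambda 1-a=\lambda\cdot(1-\lambda^{-1}\cdot a) .
$$
Because $\lambda 1$ is product-invertible with inverse $\lambda^{-1}1$, it suffices to show that $u:=1-b$ is product-invertible, where $b:=\lambda^{-1}\cdot a$. The hypothesis gives $\|b\|=|\lambda|^{-1}\|a\|<1$, which is exactly the smallness we will exploit.

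The key step is to invert $1-b$ through the Neumann series $\sum_{n=0}^{\infty}b^n$. Using submultiplicativity of the norm (condition 4 of the quasi-algebra norm), one has $\|b^n\|\leq\|b\|^n$, so the partial sums $s_N=\sum_{n=0}^{N}b^n$ are dominated in norm by the convergent geometric series $\sum_{n=0}^{\infty}\|b\|^n=(1-\|b\|)^{-1}$; hence $(s_N)$ is Cauchy. Since $A$ is a Banach quasi-algebra it is complete, so $s_N\to s$ for some $s\in A$. A telescoping computation gives $(1-b)s_N=s_N(1-b)=1-b^{N+1}$, and as $\|b^{N+1}\|\leq\|b\|^{N+1}\to 0$ we have $b^{N+1}\to 0$. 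Passing to the limit, using continuity of the product (Theorem \ref{continuity of operations}, or directly $\|(1-b)(s_N-s)\|\leq\|1-b\|\,\|s_N-s\|\to 0$), yields $(1-b)s=s(1-b)=1$, so $1-b\in G(A)$ with inverse $s$.

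Finally, recombining the factors gives $(\lambda 1-a)^{-1}=\lambda^{-1}\cdot s$, so $\lambda 1-a\in G(A)$ and $\lambda\notin Sp(a)$, which completes the argument. The main obstacle is the convergence step for the Neumann series: it is precisely here that completeness of $A$ and the submultiplicative norm inequality are indispensable, and one must check carefully that the limit $s$ is a genuine \emph{two-sided} product-inverse rather than merely a one-sided one — this is what the symmetric telescoping identity $(1-b)s_N=s_N(1-b)=1-b^{N+1}$ secures.
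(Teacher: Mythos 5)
Your Neumann-series argument is correct and complete: the paper itself gives no proof of this lemma, importing it from the cited reference \cite{2}, where the standard argument is precisely the one you give (for $|\lambda|>\|a\|$ factor $\lambda 1-a=\lambda\cdot(1-b)$ with $b=\lambda^{-1}\cdot a$, $\|b\|<1$, and invert $1-b$ by the geometric series, using submultiplicativity of the norm and completeness of $A$). Nothing is missing, and the two-sided telescoping identity you emphasize is exactly what guarantees that the limit $s$ is a genuine product-inverse rather than a one-sided one.
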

\begin{theorem}
Let $X$ be a Banach quasi-algebra and $x\in X$. Then
$$
{QSp}_X (x)\subseteq\lbrace\lambda\in\mathbb{R}:\vert\lambda\vert\leq{\Vert x\Vert}_X\rbrace .
$$
\end{theorem}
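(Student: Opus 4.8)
The plan is to unfold the definition of the quasi-spectrum and reduce the claim to the already-known spectral radius bound inside the Banach algebra $X_r$. Suppose $\lambda\in{QSp}_X(x)$. By the definition of the quasi-spectrum, there exists $t\in X_r$ with $t\leq x$ and $\lambda\in{Sp}_{X_r}(t)$. The whole proof then amounts to bounding $|\lambda|$ by $\|x\|_X$, and I would do this by inserting the intermediate quantity $\|t\|_X$.

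First I would invoke Lemma \ref{XrIsB-algebra}, which guarantees that $X_r$ is a real Banach algebra under the norm inherited from $X$. Because $t\in X_r$ and $\lambda\in{Sp}_{X_r}(t)$, Lemma \ref{spa<|a|} applies to $t$ viewed as an element of this Banach algebra and yields $|\lambda|\leq\|t\|_X$. This handles the ``algebraic'' half of the bound, locating $\lambda$ inside the disk of radius $\|t\|_X$.

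Next I would exploit the order relation $t\leq x$ together with the fifth axiom in the definition of a quasi-algebra norm, namely that $t\leq x$ implies $\|t\|_X\leq\|x\|_X$. Concatenating the two inequalities gives $|\lambda|\leq\|t\|_X\leq\|x\|_X$, which is exactly the asserted inclusion once we recall that $\lambda\in{QSp}_X(x)$ was arbitrary (and the statement is vacuous if the quasi-spectrum is empty).

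I do not expect a genuine obstacle here: the argument is simply the composition of the spectral radius estimate in $X_r$ with the monotonicity of the norm. The only step meriting care is the identification of the norm used in the spectral bound for $X_r$ with the restriction of $\|\cdot\|_X$, which is precisely what Lemma \ref{XrIsB-algebra} provides; once that identification is in place, the chain of inequalities closes immediately.
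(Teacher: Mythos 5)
Your proposal is correct and follows essentially the same route as the paper: both invoke Lemma \ref{XrIsB-algebra} to see that $X_r$ is a Banach algebra, apply Lemma \ref{spa<|a|} to get $|\lambda|\leq\|t\|_X$, and finish with the monotonicity of the norm ($t\leq x$ implies $\|t\|_X\leq\|x\|_X$). The only difference is that you argue elementwise while the paper writes the same chain of inclusions at the level of unions of disks.
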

\begin{proof}
 $X_r$ is a Banach algebra by lemma \ref{XrIsB-algebra}, so 
 $$
 {QSp}_X (x)={\cup}_{(t\in X_r,t\leq x)}{Sp}_{X_r}(t)\subseteq{\cup}_{(t\in X_r,t\leq x)}\lbrace\lambda\in\mathbb{R} :\vert\lambda\vert\leq{\Vert t\Vert}_X\rbrace 
 $$
by lemma \ref{spa<|a|}. Since ${\Vert t\Vert}_X\leq {\Vert x\Vert}_X$, then
$$
{\cup}_{(t\in X_r,t\leq x)} \lbrace\lambda\in\mathbb{R} :\vert\lambda\vert\leq{\Vert t\Vert}_X\rbrace\subseteq\lbrace\lambda\in\mathbb{R} :\vert\lambda\vert\leq{\Vert x\Vert}_X\rbrace .
$$
\end{proof}
 \begin{theorem}
 Let X be a non-linear quasi-algebra. If $\lambda\in {QSp}_X  x$ for some $x\in X$, then there are infinitely many distinct elements so that quasi-spectrum of each of them contains $\lambda$.
 \end{theorem}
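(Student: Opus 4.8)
The plan is to reduce the statement to two ingredients that are already available: the monotonicity of the quasi-spectrum under the partial order, and the infinite strictly increasing chain furnished by Theorem~\ref{chainOfsingular}.

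First I would record the monotonicity property: if $x\leq y$ in $X$, then ${QSp}_X(x)\subseteq {QSp}_X(y)$. This is immediate from the definition of the quasi-spectrum. Indeed, suppose $\mu\in {QSp}_X(x)$; then there is some $t\in X_r$ with $t\leq x$ and $\mu\in {Sp}_{X_r}(t)$. By transitivity of the partial order, $t\leq y$, so $t$ is among the elements over which the union defining ${QSp}_X(y)$ is taken, and hence $\mu\in {QSp}_X(y)$.

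Next, since $X$ is non-linear, Theorem~\ref{chainOfsingular} applied to the given $x$ produces elements $x_1,x_2,x_3,\ldots\in X$ with
$$
x\lneqq x_1\lneqq x_2\lneqq x_3\lneqq\cdots .
$$
Being strictly increasing, these $x_n$ are pairwise distinct, so they constitute infinitely many distinct elements of $X$. By transitivity we have $x\leq x_n$ for every $n$, and the monotonicity just established gives ${QSp}_X(x)\subseteq {QSp}_X(x_n)$. Since $\lambda\in {QSp}_X(x)$ by hypothesis, it follows that $\lambda\in {QSp}_X(x_n)$ for all $n$, which is exactly the assertion.

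The entire argument rests on the monotonicity observation, after which the chain from Theorem~\ref{chainOfsingular} does the work, so I do not anticipate a genuine obstacle. The one point that merits a check is that each chain element lies in a quasi-algebra satisfying the standing hypothesis of this section---namely that the identity is regular---so that ${QSp}_X(x_n)$ is well-defined for every $n$; since all the $x_n$ belong to the same ambient space $X$, this is automatic.
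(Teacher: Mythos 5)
Your proof is correct and follows essentially the same route as the paper: invoke Theorem~\ref{chainOfsingular} to get the strictly increasing chain above $x$, then use monotonicity of the quasi-spectrum under the partial order. The paper leaves the monotonicity step implicit, whereas you spell it out, which is a harmless (and arguably welcome) addition.
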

 \begin{proof}
 Since $X$ is a non-linear quasi-algebra, so there exist $x_1,x_2,x_3,\cdots$ such that
$$
x\lneqq x_1\lneqq x_2\lneqq x_3\lneqq\cdots
$$
by theorem \ref{chainOfsingular}. So
$$
\lambda\in {QSp}_X  x\subseteq {QSp}_X x_1\subseteq {QSp}_X x_2\subseteq {QSp}_X x_3\subseteq\cdots .
$$
 \end{proof}
\begin{problem}
In a complex Banach algebra, spectrum of any elements is nonempty \cite{2}. Therefore, we can ensure that the spectrum of regular elements is nonempty by defining a complex Banach quasi-algebra. That doesn't seem too difficult. Suppose we are able to define this space well, but can we be sure that the spectrum of singular elements is nonempty? The answer to this question is yes if every singular element has a regular element smaller than itself. But how does this happen? The following example may be useful:
\begin{example}
If $X={({\Omega}_C(\mathbb{R}))}_s\cup \lbrace\lbrace 0\rbrace\rbrace$, $a,b\in\mathbb{R}$ and $a,b\geq 0$, then ${QSp}_X[a,b]=\emptyset$, and that is because $[a,b]$ does not contain any regualar element.
\end{example}
Of course, this example was with the field of real numbers. Anyway, what condition or conditions are required for an singular element to have an regular element smaller than itself?
\end{problem}


%
%



\end{document}